\newtheorem{definition}{Definition} 
\newtheorem{theorem}{Theorem}
\newtheorem{example}{Example}
\title{Group Actions on Winning Games of Super Tic-Tac-Toe}
\author{Whitney George}
\affil{Department of Mathematics and Statistics\\ University of Wisconsin-La Cross\\ wgeorge@uwlax.edu}
\author{Janine E. Janoski}
\affil{Department of Mathematics and Computer Science\\ King's College \\ janinejanoski@kings.edu}
\date{}
\begin{document}

\maketitle

\begin{abstract}
Consider a $n \times n$ tic-tac-toe board. In each field of the board, draw a smaller $n\times n$ tic-tac-toe board. Now let super tic-tac-toe (STTT) be a game where each player's move dictates which field on the larger board  a player must make their next move. We will play an impartial game of STTT where each player uses X. We define a set of actions on a game board which gives rise to a group-action on the game that creates equivalent games. We will discuss how the structure of this group-action forms a Dihedral group.
\end{abstract}

\section{Introduction}
Tic-tac-toe is a two player game where players X and O take turns marking a $3\times 3$ grid. The player who successfully places three of their symbol in a horizontal, vertical, or diagonal row wins the game. The game of tic-tac-toe can be traced back to Ancient Egypt. 

Alternate variations of tic-tac-toe have been developed over time, such as 3 dimensional tic-tac-toe  on a $3\times 3 \times 3$ cube and Qubic played on a $4 \times 4 \times 4$ matrix sold by Parker Brothers. Other versions include quantum tic-tac-toe developed by Allan Goff \cite{goff} and tic-tac-toe played on an affine plane \cite{carroll}.

We will focus on the game of super tic-tac-toe (STTT), sometimes called ultimate tic-tac-toe. Super tic-tac-toe has been used in the computer science community as an example of Monte-Carlo Artificial Intelligence. Suppose we have a standard $3\times 3$ tic-tac-toe board. Now, inside each of the 9 squares, we place {\it another} tic-tac-toe board (Figure \ref{3x3}). 

\begin{figure}[h!]
\begin{center}\includegraphics[scale=.41]{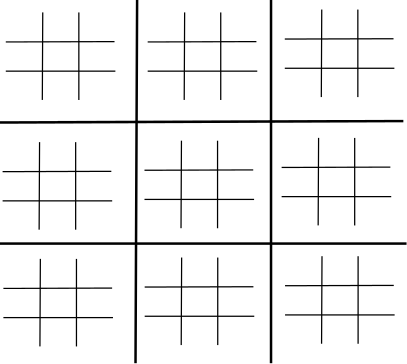}\end{center}
\caption{The board for a $3\times 3$ Super tic-tac-toe game}%
\label{3x3}%
\end{figure}

We refer to the larger $3\times 3$ grid as a board and the 9 smaller $3\times 3$ grids as fields. Player 1 plays X's and player 2 plays O's. Now, suppose that player 1 decides to put an X in the middle of the field in the upper left corner of the board. Then, player 2 must put an O anywhere in the middle field of the board. In other words, the placement in a field of a player determines which field on the board the next player must play (Figure \ref{game}). 

\begin{figure}[h!]
\begin{center}\includegraphics[scale=.33]{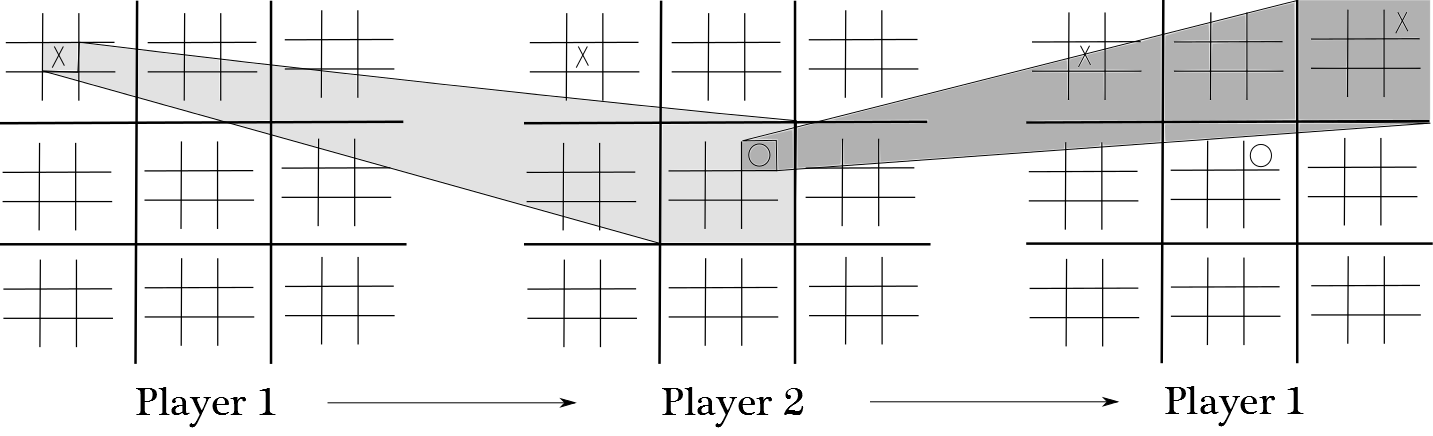}\end{center}
\caption{First 3 plays of a super tic-tac-toe game}%
\label{game}%
\end{figure}

 A game of super tic-tac-toe is won in a very similar fashion as a tradition tic-tac-toe game is won: You must get three X's or three O's in a row on the board. In order to get an X or and O on the board, the player  must win the game of tic-tac-toe in the corresponding field (Figure \ref{win}).

\begin{figure}[h!]
\begin{center}\includegraphics[scale=.41]{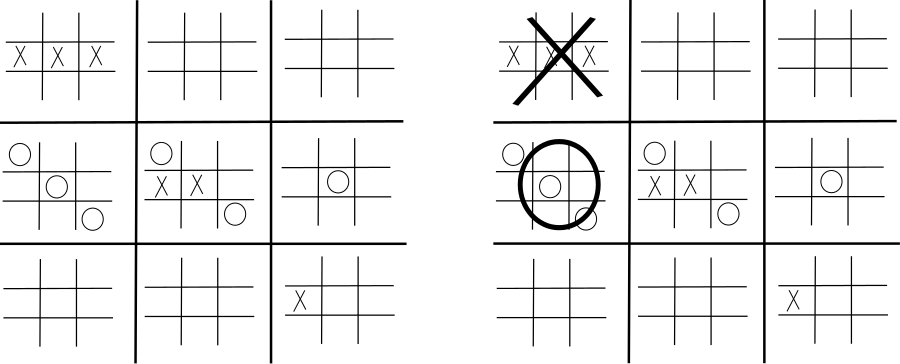}\end{center}
\caption{Winning a field in Super tic-tac-toe game}%
\label{win}%
\end{figure} 

It is possible for a game of tic-tac-toe to end in a draw; meaning that no player successfully placed three of their symbol in a row. If that happens in one of the fields on the board of super tic-tac-toe, then the player who has the most of their symbol in that field will win that square on the board. It is also possible that the current players move will send the next player to a field that has already been won. In this case, the next player is allowed to move anywhere on the board.

\section{Generalized Super Tic-Tac-Toe}

We can generalize super tic-tac-toe to be played on an a $n\times n$ tic-tac-toe board, where another $n\times n$ tic-tac-toe board is contained in each of its $n^2$ squares on the board. We refer to the larger $n\times n$ board as an $n$-board and the smaller $n\times n$ boards as $i$-fields.

\begin{definition} An $i$-field, $F_i$, is an $i\times i$ array with positions $a_{i,j}$ for $1\leq j\leq n^2$ where the label ordering for $j$ occur in a counter clockwise spiral. For example,

\begin{figure}[h]
\begin{center}
\begin{tikzpicture}[scale=.5]
\node at (-2,3.25) {$F_5=$};
\path[draw] (-.5,-.5)--(-.5,7);
\path[draw]  (7,-.5)--(7,7);
\path[draw] (-.5,-.5)--(7,-.5);
\path[draw]  (-.5,7)--(7,7);
\path[draw] (-.5,1)--(7,1);
\path[draw] (-.5,2.5)--(7,2.5);
\path[draw] (-.5,4)--(7,4);
\path[draw] (-.5,5.5)--(7,5.5);

\path[draw] (1,-.5)--(1,7);
\path[draw] (2.5,-.5)--(2.5,7);
\path[draw] (4,-.5)--(4,7);
\path[draw] (5.5,-.5)--(5.5,7);

\node at (0.25,6.25) {\tiny{$a_{5,1}$}};
\node at (0.25,4.75) {\tiny{$a_{5,2}$}};
\node at (0.25,3.25) {\tiny{$a_{5,3}$}};
\node at (0.25,1.75) {\tiny{$a_{5,4}$}};
\node at (0.25,.25) {\tiny{$a_{5,5}$}};

\node at (1.75,6.25) {\tiny{$a_{5,16}$}};
\node at (1.75,.25) {\tiny{$a_{5,6}$}};
\node at (3.25,6.25) {\tiny{$a_{5,15}$}};
\node at (3.25,.25) {\tiny{$a_{5,7}$}};
\node at (4.75,6.25) {\tiny{$a_{5,14}$}};
\node at (4.75,.25) {\tiny{$a_{5,8}$}};

\node at (6.25,6.25) {\tiny{$a_{5,13}$}};
\node at (6.25,4.75) {\tiny{$a_{5,12}$}};
\node at (6.25,3.25) {\tiny{$a_{5,11}$}};
\node at (6.25,1.75) {\tiny{$a_{5,10}$}};
\node at (6.25,.25) {\tiny{$a_{5,9}$}};
\node at (3.25,3.25) {\tiny{$a_{5,25}$}};
\node at (1.75,4.75) {\tiny{$a_{5,17}$}};
\node at (1.75,3.25) {\tiny{$a_{5,18}$}};
\node at (1.75,1.75) {\tiny{$a_{5,19}$}};

\node at (3.25,4.75) {\tiny{$a_{5,24}$}};
\node at (3.25,1.75) {\tiny{$a_{5,20}$}};

\node at (4.75,4.75) {\tiny{$a_{5,23}$}};
\node at (4.75,3.25) {\tiny{$a_{5,22}$}};
\node at (4.75,1.75) {\tiny{$a_{5,21}$}};
\end{tikzpicture}
\end{center}
\end{figure}

\vspace{.5cm}
\noindent Note the $a_{i,j}$ position is empty, an $O$, or an $X$.   
\end{definition}

\begin{definition}
An $n$-board, $B_n$,  is an $n\times n$ array of fields, $F_i$, written in a counter clockwise spiral. For example, 

\newpage
\begin{figure}[h]
\begin{center}
\begin{tikzpicture}[scale=.5]
\node at (-2,3.25) {$B_5=$};
\path[draw] (-.5,-.5)--(-.5,7);
\path[draw]  (7,-.5)--(7,7);
\path[draw] (-.5,-.5)--(7,-.5);
\path[draw]  (-.5,7)--(7,7);
\path[draw] (-.5,1)--(7,1);
\path[draw] (-.5,2.5)--(7,2.5);
\path[draw] (-.5,4)--(7,4);
\path[draw] (-.5,5.5)--(7,5.5);

\path[draw] (1,-.5)--(1,7);
\path[draw] (2.5,-.5)--(2.5,7);
\path[draw] (4,-.5)--(4,7);
\path[draw] (5.5,-.5)--(5.5,7);

\node at (0.25,6.25) {$F_1$};
\node at (0.25,4.75) {$F_2$};
\node at (0.25,3.25) {$F_3$};
\node at (0.25,1.75) {$F_4$};
\node at (0.25,.25) {$F_5$};

\node at (1.75,6.25) {$F_{16}$};
\node at (1.75,.25) {$F_6$};
\node at (3.25,6.25) {$F_{15}$};
\node at (3.25,.25) {$F_7$};
\node at (4.75,6.25) {$F_{14}$};
\node at (4.75,.25) {$F_8$};

\node at (6.25,6.25) {$F_{13}$};
\node at (6.25,4.75) {$F_{12}$};
\node at (6.25,3.25) {$F_{11}$};
\node at (6.25,1.75) {$F_{10}$};
\node at (6.25,.25) {$F_9$};
\node at (3.25,3.25) {$F_{25}$};
\node at (1.75,4.75) {$F_{17}$};
\node at (1.75,3.25) {$F_{18}$};
\node at (1.75,1.75) {$F_{19}$};

\node at (3.25,4.75) {$F_{24}$};
\node at (3.25,1.75) {$F_{20}$};

\node at (4.75,4.75) {$F_{23}$};
\node at (4.75,3.25) {$F_{22}$};
\node at (4.75,1.75) {$F_{21}$};
\end{tikzpicture}
\end{center}
\end{figure}

\vspace{.5cm}
 \noindent  The set of all $n$-boards is denoted by $\mathcal{B}_n$. We can represent an $n$-board $B_n\in\mathcal{B}_n$ by a list, $$[a_{1,1}, a_{1,2}, a_{1,3}, \dots, a_{1,n^2}, a_{2,1}, a_{2,2}, \dots a_{2,n^2}, \dots a_{n^2,1}, a_{n^2,2}, \dots a_{n^2, n^2}].$$

\noindent 

\end{definition}

\begin{figure}[ht]
\begin{center}
\begin{tikzpicture}

\path[draw] (0,-1)--(12,-1);
\path[draw] (0,-5)--(12,-5);

\path[draw] (4,-9)--(4,3);
\path[draw] (8,-9)--(8,3);

\path[draw] (.75,1.5)--(3.25,1.5);
\path[draw] (.75,.5)--(3.25,.5);
\path[draw] (1.5,2.25)--(1.5,-.25);
\path[draw] (2.5,2.25)--(2.5,-.25);
\node at (1,2) {$a_{1,1}$};
\node at (2,2) {$a_{1,8}$};
\node at (3,2) {$a_{1,7}$};
\node at (1,1) {$a_{1,2}$};
\node at (2,1) {$a_{1,9}$};
\node at (3,1) {$a_{1,6}$};
\node at (1,0) {$a_{1,3}$};
\node at (2,0) {$a_{1,4}$};
\node at (3,0) {$a_{1,5}$};

\path[draw] (4.75,1.5)--(7.25,1.5);
\path[draw] (4.75,.5)--(7.25,.5);
\path[draw] (5.5,2.25)--(5.5,-.25);
\path[draw] (6.5,2.25)--(6.5,-.25);
\node at (5,2) {$a_{8,1}$};
\node at (6,2) {$a_{8,8}$};
\node at (7,2) {$a_{8,7}$};
\node at (5,1) {$a_{8,2}$};
\node at (6,1) {$a_{8,9}$};
\node at (7,1) {$a_{8,6}$};
\node at (5,0) {$a_{8,3}$};
\node at (6,0) {$a_{8,4}$};
\node at (7,0) {$a_{8,5}$};

\path[draw] (8.75,1.5)--(11.25,1.5);
\path[draw] (8.75,.5)--(11.25,.5);
\path[draw] (9.5,2.25)--(9.5,-.25);
\path[draw] (10.5,2.25)--(10.5,-.25);
\node at (9,2) {$a_{7,1}$};
\node at (10,2) {$a_{7,8}$};
\node at (11,2) {$a_{7,7}$};
\node at (9,1) {$a_{7,2}$};
\node at (10,1) {$a_{7,9}$};
\node at (11,1) {$a_{7,6}$};
\node at (9,0) {$a_{7,3}$};
\node at (10,0) {$a_{7,4}$};
\node at (11,0) {$a_{7,5}$};

\path[draw] (.75,-2.5)--(3.25,-2.5);
\path[draw] (.75,-3.5)--(3.25,-3.5);
\path[draw] (1.5,-1.75)--(1.5,-4.25);
\path[draw] (2.5,-1.75)--(2.5,-4.25);
\node at (1,-2) {$a_{2,1}$};
\node at (2,-2) {$a_{2,8}$};
\node at (3,-2) {$a_{2,7}$};
\node at (1,-3) {$a_{2,2}$};
\node at (2,-3) {$a_{2,9}$};
\node at (3,-3) {$a_{2,6}$};
\node at (1,-4) {$a_{2,3}$};
\node at (2,-4) {$a_{2,4}$};
\node at (3,-4) {$a_{2,5}$};

\path[draw] (4.75,-2.5)--(7.25,-2.5);
\path[draw] (4.75,-3.5)--(7.25,-3.5);
\path[draw] (5.5,-1.75)--(5.5,-4.25);
\path[draw] (6.5,-1.75)--(6.5,-4.25);
\node at (5,-2) {$a_{9,1}$};
\node at (6,-2) {$a_{9,8}$};
\node at (7,-2) {$a_{9,7}$};
\node at (5,-3) {$a_{9,2}$};
\node at (6,-3) {$a_{9,9}$};
\node at (7,-3) {$a_{9,6}$};
\node at (5,-4) {$a_{9,3}$};
\node at (6,-4) {$a_{9,4}$};
\node at (7,-4) {$a_{9,5}$};

\path[draw] (8.75,-2.5)--(11.25,-2.5);
\path[draw] (8.75,-3.5)--(11.25,-3.5);
\path[draw] (9.5,-1.75)--(9.5,-4.25);
\path[draw] (10.5,-1.75)--(10.5,-4.25);
\node at (9,-2) {$a_{6,1}$};
\node at (10,-2) {$a_{6,8}$};
\node at (11,-2) {$a_{6,7}$};
\node at (9,-3) {$a_{6,2}$};
\node at (10,-3) {$a_{6,9}$};
\node at (11,-3) {$a_{6,6}$};
\node at (9,-4) {$a_{6,3}$};
\node at (10,-4) {$a_{6,4}$};
\node at (11,-4) {$a_{6,5}$};

\path[draw] (.75,-6.5)--(3.25,-6.5);
\path[draw] (.75,-7.5)--(3.25,-7.5);
\path[draw] (1.5,-5.75)--(1.5,-8.25);
\path[draw] (2.5,-5.75)--(2.5,-8.25);
\node at (1,-6) {$a_{3,1}$};
\node at (2,-6) {$a_{3,8}$};
\node at (3,-6) {$a_{3,7}$};
\node at (1,-7) {$a_{3,2}$};
\node at (2,-7) {$a_{3,9}$};
\node at (3,-7) {$a_{3,6}$};
\node at (1,-8) {$a_{3,3}$};
\node at (2,-8) {$a_{3,4}$};
\node at (3,-8) {$a_{3,5}$};

\path[draw] (4.75,-6.5)--(7.25,-6.5);
\path[draw] (4.75,-7.5)--(7.25,-7.5);
\path[draw] (5.5,-5.75)--(5.5,-8.25);
\path[draw] (6.5,-5.75)--(6.5,-8.25);
\node at (5,-6) {$a_{4,1}$};
\node at (6,-6) {$a_{4,8}$};
\node at (7,-6) {$a_{4,7}$};
\node at (5,-7) {$a_{4,2}$};
\node at (6,-7) {$a_{4,9}$};
\node at (7,-7) {$a_{4,6}$};
\node at (5,-8) {$a_{4,3}$};
\node at (6,-8) {$a_{4,4}$};
\node at (7,-8) {$a_{4,5}$};

\path[draw] (8.75,-6.5)--(11.25,-6.5);
\path[draw] (8.75,-7.5)--(11.25,-7.5);
\path[draw] (9.5,-5.75)--(9.5,-8.25);
\path[draw] (10.5,-5.75)--(10.5,-8.25);
\node at (9,-6) {$a_{5,1}$};
\node at (10,-6) {$a_{5,8}$};
\node at (11,-6) {$a_{5,7}$};
\node at (9,-7) {$a_{5,2}$};
\node at (10,-7) {$a_{5,9}$};
\node at (11,-7) {$a_{5,6}$};
\node at (9,-8) {$a_{5,3}$};
\node at (10,-8) {$a_{5,4}$};
\node at (11,-8) {$a_{5,5}$};
\end{tikzpicture}
\end{center}
\caption{An example of a $3\times 3$ super tic-tac-toe board, which we call $B_3$}
\end{figure}

To win a square on the $n$-board, the player must place $n$ of their symbols horizontally, vertically, or diagonally in a row in the corresponding $i$-field. In the case of a tie on an $i$-field, the player that has more X's or O's on the smaller board wins the larger board. 

We will follow the rules laid out in \cite{2by2}. While the game is played, the current move a player makes in an $i$-field dictates the square on the $n$-board where the next player must move. In particular, if a player moves in position $a_{i,j}$, then the next player must move in $F_j$. For example, if the current player moves in position $a_{8,4}$ then the next player must move in $F_4$. If a player is sent to a square on the $n$-board that is already won, the player can choose any $i$-field to play in.

We will focus our work on impartial super tic-tac-toe with misere rules. Both players play X and the player to complete $n$ in a row on the board loses. In \cite{notakto}, the structure of misere tic-tac-toe was studied for disjunctive $3\times 3$ games, that is a game with an arbitrary number of $3 \times 3$ boards. 

We want to study the structure of the board in super tic-tac-toe. In particular, we want to impose a set of group actions and look at the symmetries in different boards. We will look at the structure of any size $n\times n$ super tic-tac-toe board and prove that the group actions are isomorphic to a dihedral group $D_{m}$.

\section{Decomposition of a $n\times n$ Square}
Given an $n\times n$ square board, we number each square starting with 1 in the upper left corner and moving counter clockwise. If the next square has already been filled with a number, place the next number in the square below the current position and repeat until all $n^2$ squares have been filled (Figure \ref{5x5square}). We refer to such a square as a {\it{numbered square}}.

\begin{figure}[h]
\begin{center}
\begin{tikzpicture}[scale=.5]
\path[draw] (-.5,1)--(7,1);
\path[draw] (-.5,2.5)--(7,2.5);
\path[draw] (-.5,4)--(7,4);
\path[draw] (-.5,5.5)--(7,5.5);

\path[draw] (1,0)--(1,6.75);
\path[draw] (2.5,0)--(2.5,6.75);
\path[draw] (4,0)--(4,6.75);
\path[draw] (5.5,0)--(5.5,6.75);

\node at (0.25,6.25) {1};
\node at (0.25,4.75) {2};
\node at (0.25,3.25) {3};
\node at (0.25,1.75) {4};
\node at (0.25,.25) {5};

\node at (1.75,6.25) {16};
\node at (1.75,4.75) {17};
\node at (1.75,3.25) {18};
\node at (1.75,1.75) {19};
\node at (1.75,.25) {6};

\node at (3.25,6.25) {15};
\node at (3.25,4.75) {24};
\node at (3.25,3.25) {25};
\node at (3.25,1.75) {20};
\node at (3.25,.25) {7};

\node at (4.75,6.25) {14};
\node at (4.75,4.75) {23};
\node at (4.75,3.25) {22};
\node at (4.75,1.75) {21};
\node at (4.75,.25) {8};

\node at (6.25,6.25) {13};
\node at (6.25,4.75) {12};
\node at (6.25,3.25) {11};
\node at (6.25,1.75) {10};
\node at (6.25,.25) {9};
\end{tikzpicture}
\end{center}
\caption{A $5\times 5$ Numbered Square}\label{5x5square}
\end{figure}

Given an $n\times n$ numbered square, we can decompose the board into layers (Figure 6). If $n$ is odd, layer 1 is the inner most $1\times 1$ square, layer 2 is the set of squares surrounding layer 1, layer 3 is the outer set of squares surrounding layer 2, and so forth.  If $n$ is even, layer 1 is the inner most $2\times 2$ square, layer 2 is the set of square surrounding layer 1, and so forth. For an $n\times n$ square, there are $\displaystyle{\left\lfloor\dfrac{n+1}{2}\right\rfloor}$ layers.

\begin{definition}
Given an $n\times n$ numbered square, the {\bf{$k^{th}$ level set}} is the list of numbers $[k_0, k_1, k_2,....,k_j]$ in layer k where $k_i\leq k_{i+1}$. Note that, $k_i=k_{i-(j+1)}$.
\end{definition}

If $n$ is odd, then the $1^{st}$ level set has cardinality 1, and the $k^{th}$ level set has cardinality $8(k-1)$. If $n$ is even, then the $k^{th}$ level set has cardinality $4+8(k-1)$. 

\begin{example}Consider the $5\times 5$ numbered square in Figure \ref{5x5square}. There are $\displaystyle{\left\lfloor\dfrac{5+1}{2}\right\rfloor=3}$ layers. The $1^{st}$ layer set is $[25]$, the $2^{nd}$ layer set is $[17,18,19,20,21,22,23,24]$, and the $3^{rd}$ layer set is $[1,2,3,4,5,6,7,8,9,10,11,12,13,14,15,16]$ as seen in Figure 6.

\begin{figure}[h]
\begin{center}
\begin{tikzpicture}[scale=.5]
\path[draw] (-.5,-.5)--(-.5,7);
\path[draw]  (7,-.5)--(7,7);
\path[draw] (-.5,-.5)--(7,-.5);
\path[draw]  (-.5,7)--(7,7);
\path[draw] (-.5,1)--(7,1);
\path[draw] (-.5,2.5)--(7,2.5);
\path[draw] (-.5,4)--(7,4);
\path[draw] (-.5,5.5)--(7,5.5);

\path[draw] (1,-.5)--(1,7);
\path[draw] (2.5,-.5)--(2.5,7);
\path[draw] (4,-.5)--(4,7);
\path[draw] (5.5,-.5)--(5.5,7);

\node at (0.25,6.25) {1};
\node at (0.25,4.75) {2};
\node at (0.25,3.25) {3};
\node at (0.25,1.75) {4};
\node at (0.25,.25) {5};

\node at (1.75,6.25) {16};
\node at (1.75,.25) {6};
\node at (3.25,6.25) {15};
\node at (3.25,.25) {7};
\node at (4.75,6.25) {14};
\node at (4.75,.25) {8};

\node at (6.25,6.25) {13};
\node at (6.25,4.75) {12};
\node at (6.25,3.25) {11};
\node at (6.25,1.75) {10};
\node at (6.25,.25) {9};

\path[draw] (11,1)--(15.5,1);
\path[draw] (11,2.5)--(15.5,2.5);
\path[draw] (11,4)--(15.5,4);
\path[draw] (11,5.5)--(15.5,5.5);

\path[draw] (11,1)--(11,5.5);
\path[draw] (12.5,1)--(12.5,5.5);
\path[draw] (14,1)--(14,5.5);
\path[draw] (15.5,1)--(15.5,5.5);

\node at (11.75,4.75) {17};
\node at (11.75,3.25) {18};
\node at (11.75,1.75) {19};

\node at (13.25,4.75) {24};
\node at (13.25,1.75) {20};

\node at (14.75,4.75) {23};
\node at (14.75,3.25) {22};
\node at (14.75,1.75) {21};

\path[draw] (19.5,2.5)--(21,2.5);
\path[draw] (19.5,4)--(21,4);
\path[draw] (19.5,2.5)--(19.5,4);
\path[draw] (21,2.5)--(21,4);

\node at (20.25,3.25) {25};
\end{tikzpicture}
\end{center}

\caption{From Left to Right: Layer 3, Layer 2, and Layer 1 of a $5\times5$ square}\label{layers}
\end{figure}
\end{example}

\section{Rotations and Reflections of a Super Tic-Tac-Toe Board}

\subsection{Rotations of a Super Tic-Tac-Toe Board}
Consider an $n\times n$ numbered square with $[k_0, k_1, k_2,\dots, k_j]$ as the $k^{th}$ level set. A {\it{rotation of layer $k$}} is a counterclockwise rotation represented by the permutation $\sigma_k=(k_0, k_1, k_2,\dots, k_j)\in Sym(S)$, where $S=\{k_0, k_1,\dots,k_{j}\}$.\\

\noindent A {\it{rotation of an $n\times n$ numbered square}} is a counter clockwise rotation in each layer given by $$\sigma=\sigma_1\sigma_2\sigma_3\cdots\sigma_l,$$ where $l=\displaystyle{\left\lfloor\dfrac{n+1}{2}\right\rfloor}$ is the number of layers in an $n\times n$ square (Figure \ref{rot_square}). 

\begin{figure}[h]
\begin{center}
\begin{tikzpicture}[scale=.5]
\path[draw] (-.5,-.5)--(-.5,7);
\path[draw]  (7,-.5)--(7,7);
\path[draw] (-.5,-.5)--(7,-.5);
\path[draw]  (-.5,7)--(7,7);
\path[draw] (-.5,1)--(7,1);
\path[draw] (-.5,2.5)--(7,2.5);
\path[draw] (-.5,4)--(7,4);
\path[draw] (-.5,5.5)--(7,5.5);

\path[draw] (1,-.5)--(1,7);
\path[draw] (2.5,-.5)--(2.5,7);
\path[draw] (4,-.5)--(4,7);
\path[draw] (5.5,-.5)--(5.5,7);

\node at (0.25,6.25) {1};
\node at (0.25,4.75) {2};
\node at (0.25,3.25) {3};
\node at (0.25,1.75) {4};
\node at (0.25,.25) {5};

\node at (1.75,6.25) {16};
\node at (1.75,.25) {6};
\node at (3.25,6.25) {15};
\node at (3.25,.25) {7};
\node at (4.75,6.25) {14};
\node at (4.75,.25) {8};

\node at (6.25,6.25) {13};
\node at (6.25,4.75) {12};
\node at (6.25,3.25) {11};
\node at (6.25,1.75) {10};
\node at (6.25,.25) {9};
\node at (3.25,3.25) {25};
\node at (1.75,4.75) {17};
\node at (1.75,3.25) {18};
\node at (1.75,1.75) {19};

\node at (3.25,4.75) {24};
\node at (3.25,1.75) {20};

\node at (4.75,4.75) {23};
\node at (4.75,3.25) {22};
\node at (4.75,1.75) {21};

\draw[->] (10,3.25)--(12,3.25);
\node at (11,4.25) {$\sigma$};

\path[draw] (14.5,-.5)--(14.5,7);
\path[draw]  (22,-.5)--(22,7);
\path[draw] (14.5,-.5)--(22,-.5);
\path[draw]  (14.5,7)--(22,7);
\path[draw] (14.5,1)--(22,1);
\path[draw] (14.5,2.5)--(22,2.5);
\path[draw] (14.5,4)--(22,4);
\path[draw] (14.5,5.5)--(22,5.5);

\path[draw] (16,-.5)--(16,7);
\path[draw] (17.5,-.5)--(17.5,7);
\path[draw] (19,-.5)--(19,7);
\path[draw] (20.5,-.5)--(20.5,7);

\node at (15.25,6.25) {16};
\node at (15.25,4.75) {1};
\node at (15.25,3.25) {2};
\node at (15.25,1.75) {3};
\node at (15.25,.25) {4};

\node at (16.75,6.25) {15};
\node at (16.75,.25) {5};
\node at (18.25,6.25) {14};
\node at (18.25,.25) {6};
\node at (19.75,6.25) {13};
\node at (19.75,.25) {7};

\node at (21.25,6.25) {12};
\node at (21.25,4.75) {11};
\node at (21.25,3.25) {10};
\node at (21.25,1.75) {9};
\node at (21.25,.25) {8};
\node at (18.25,3.25) {25};
\node at (16.75,4.75) {24};
\node at (16.75,3.25) {17};
\node at (16.75,1.75) {18};

\node at (18.25,4.75) {23};
\node at (18.25,1.75) {19};

\node at (19.75,4.75) {22};
\node at (19.75,3.25) {21};
\node at (19.75,1.75) {20};
\end{tikzpicture}
\end{center}
\caption{Rotation of an $5\time 5$ numbered square}\label{rot_square}
\end{figure}


Let $[a_{1,1},\dots, a_{1,n^2}, a_{2,1}, \dots a_{2,n^2}, \dots a_{n^2,1}, \dots a_{n^2, n^2}]$ represent $B_n\in\mathcal{B}_n$. We define $\Sigma: \mathcal{B}_n\rightarrow\mathcal{B}_n$ by  \begin{align*}\Sigma([a_{1,1}, \dots, a_{1,n^2}, a_{2,1}, \dots a_{2,n^2}, \dots &a_{n^2,1}, \dots a_{n^2, n^2}])=\\&[a_{\sigma(1),\sigma(1)}, a_{\sigma(1),\sigma(2)}, \dots, a_{\sigma(n^2),\sigma(n^2)}].\end{align*}

\noindent In other words, $\Sigma$ is a counterclockwise rotation of an $n$-board whose fields have also been rotated in a counterclockwise manner (Figure \ref{rot_board}).

\begin{figure}[h]
\begin{center}
\begin{tikzpicture}[scale=.5]
\path[draw] (0,0)--(6,0);
\path[draw] (3,-3)--(3,3);
\path[draw] (1.35,-2.35)--(1.35,-.75);
\path[draw] (1.35,2.35)--(1.35,.75);
\path[draw] (4.65,-2.35)--(4.65,-.75);
\path[draw] (4.65,2.35)--(4.65,.75);

\path[draw] (.15,1.5)--(2.35,1.5);
\path[draw] (3.65,1.5)--(5.85,1.5);
\path[draw] (.15,-1.5)--(2.35,-1.5);
\path[draw] (3.65,-1.5)--(5.85,-1.5);

\node at (0.5,2) {$a_{1,1}$};
\node at (2.25,2) {$a_{1,4}$};
\node at (0.5,.75) {$a_{1,2}$};
\node at (2.25,.75) {$a_{1,3}$};

\node at (0.5,-2.25) {$a_{2,2}$};
\node at (2.25,-2.25) {$a_{2,3}$};
\node at (0.5,-1) {$a_{2,1}$};
\node at (2.25,-1) {$a_{2,4}$};

\node at (3.85,2) {$a_{4,1}$};
\node at (5.5,2) {$a_{4,4}$};
\node at (3.85,.75) {$a_{4,2}$};
\node at (5.5,.75) {$a_{4,3}$};

\node at (3.85,-2.25) {$a_{3,2}$};
\node at (5.5,-2.25) {$a_{3,3}$};
\node at (3.85,-1) {$a_{3,1}$};
\node at (5.5,-1) {$a_{3,4}$};


\draw[->] (7,0)--(9,0);
\node at (8,1) {$\Sigma$};

\path[draw] (10,0)--(16,0);
\path[draw] (13,-3)--(13,3);
\path[draw] (11.35,-2.35)--(11.35,-.75);
\path[draw] (11.35,2.35)--(11.35,.75);
\path[draw] (14.65,-2.35)--(14.65,-.75);
\path[draw] (14.65,2.35)--(14.65,.75);

\path[draw] (10.15,1.5)--(12.35,1.5);
\path[draw] (13.65,1.5)--(15.85,1.5);
\path[draw] (10.15,-1.5)--(12.35,-1.5);
\path[draw] (13.65,-1.5)--(15.85,-1.5);

\node at (10.5,2) {$a_{4,4}$};
\node at (12.25,2) {$a_{4,3}$};
\node at (10.5,.75) {$a_{4,1}$};
\node at (12.25,.75) {$a_{4,2}$};

\node at (10.5,-2.25) {$a_{1,1}$};
\node at (12.25,-2.25) {$a_{1,2}$};
\node at (10.5,-1) {$a_{1,4}$};
\node at (12.25,-1) {$a_{1,3}$};

\node at (13.85,2) {$a_{3,4}$};
\node at (15.5,2) {$a_{3,3}$};
\node at (13.85,.75) {$a_{3,1}$};
\node at (15.5,.75) {$a_{3,2}$};

\node at (13.85,-2.25) {$a_{2,1}$};
\node at (15.5,-2.25) {$a_{2,2}$};
\node at (13.85,-1) {$a_{2,4}$};
\node at (15.5,-1) {$a_{2,3}$};
\end{tikzpicture}
\end{center}
\caption{Rotation of a $2\times 2$ super tic-tac-toe board}\label{rot_board}
\end{figure}

\subsection{Reflections of a Super Tic-Tac-Toe Board}

Consider an $n\times n$ numbered square with $l$ levels, with $[k_0, k_1, k_2,\dots, k_j]$ as the $k^{th}$ level set. A {\it{reflection of layer $k$}} is a reflection about the axis from the upper left corner of layer $k$ to the lower right corner of layer $k$ defined by $$\rho_k=(k_1,k_{-1})(k_2,k_{-2})(k_3,k_{-3}).....(k_{(j-1)/2}, k_{-(j-1)/2}).$$

\noindent A {\it{reflection}} of an $ n \times n$ square is a reflection about the axis from the upper left corner of the square to the lower right corner of the square is defined by $$\rho=\rho_1\rho_2\rho_3\cdots\rho_l,$$ where where $l=\displaystyle{\left\lfloor\dfrac{n+1}{2}\right\rfloor}$ is the number of layers in an $n\times n$ square (Figure \ref{ref_square}).

\begin{figure}[h]
\begin{center}
\begin{tikzpicture}[scale=.5]
\path[draw] (-.5,-.5)--(-.5,7);
\path[draw]  (7,-.5)--(7,7);
\path[draw] (-.5,-.5)--(7,-.5);
\path[draw]  (-.5,7)--(7,7);
\path[draw] (-.5,1)--(7,1);
\path[draw] (-.5,2.5)--(7,2.5);
\path[draw] (-.5,4)--(7,4);
\path[draw] (-.5,5.5)--(7,5.5);

\path[draw] (1,-.5)--(1,7);
\path[draw] (2.5,-.5)--(2.5,7);
\path[draw] (4,-.5)--(4,7);
\path[draw] (5.5,-.5)--(5.5,7);

\node at (0.25,6.25) {1};
\node at (0.25,4.75) {2};
\node at (0.25,3.25) {3};
\node at (0.25,1.75) {4};
\node at (0.25,.25) {5};

\node at (1.75,6.25) {16};
\node at (1.75,.25) {6};
\node at (3.25,6.25) {15};
\node at (3.25,.25) {7};
\node at (4.75,6.25) {14};
\node at (4.75,.25) {8};

\node at (6.25,6.25) {13};
\node at (6.25,4.75) {12};
\node at (6.25,3.25) {11};
\node at (6.25,1.75) {10};
\node at (6.25,.25) {9};
\node at (3.25,3.25) {25};
\node at (1.75,4.75) {17};
\node at (1.75,3.25) {18};
\node at (1.75,1.75) {19};

\node at (3.25,4.75) {24};
\node at (3.25,1.75) {20};

\node at (4.75,4.75) {23};
\node at (4.75,3.25) {22};
\node at (4.75,1.75) {21};

\draw[->] (10,3.25)--(12,3.25);
\node at (11,4.25) {$\rho$};

\path[draw] (14.5,-.5)--(14.5,7);
\path[draw]  (22,-.5)--(22,7);
\path[draw] (14.5,-.5)--(22,-.5);
\path[draw]  (14.5,7)--(22,7);
\path[draw] (14.5,1)--(22,1);
\path[draw] (14.5,2.5)--(22,2.5);
\path[draw] (14.5,4)--(22,4);
\path[draw] (14.5,5.5)--(22,5.5);

\path[draw] (16,-.5)--(16,7);
\path[draw] (17.5,-.5)--(17.5,7);
\path[draw] (19,-.5)--(19,7);
\path[draw] (20.5,-.5)--(20.5,7);

\node at (15.25,6.25) {1};
\node at (15.25,4.75) {16};
\node at (15.25,3.25) {15};
\node at (15.25,1.75) {14};
\node at (15.25,.25) {13};

\node at (16.75,6.25) {2};
\node at (16.75,.25) {12};
\node at (18.25,6.25) {3};
\node at (18.25,.25) {11};
\node at (19.75,6.25) {4};
\node at (19.75,.25) {10};

\node at (21.25,6.25) {5};
\node at (21.25,4.75) {6};
\node at (21.25,3.25) {7};
\node at (21.25,1.75) {8};
\node at (21.25,.25) {9};
\node at (18.25,3.25) {25};
\node at (16.75,4.75) {17};
\node at (16.75,3.25) {24};
\node at (16.75,1.75) {23};

\node at (18.25,4.75) {18};
\node at (18.25,1.75) {22};

\node at (19.75,4.75) {19};
\node at (19.75,3.25) {20};
\node at (19.75,1.75) {21};
\end{tikzpicture}
\end{center}
\caption{Reflection of an $5\time 5$ numbered square}\label{ref_square}
\end{figure}

Let $[a_{1,1},\dots, a_{1,n^2}, a_{2,1}, \dots a_{2,n^2}, \dots a_{n^2,1}, \dots a_{n^2, n^2}]$ represent $B_n\in\mathcal{B}_n$. We define $P: \mathcal{B}_n\rightarrow\mathcal{B}_n$ by \begin{align*}P([a_{1,1}, \dots, a_{1,n^2}, a_{2,1}, \dots a_{2,n^2}, \dots &a_{n^2,1}, \dots a_{n^2, n^2}])=\\&[a_{\rho(1),\rho(1)}, a_{\rho(1),\rho(2)}, \dots, a_{\rho(n^2),\rho(n^2)}].\end{align*}

\noindent In other words, $P$ is a reflection of an $n$-board whose fields have also been reflected (Figure \ref{ref_board}). 
\begin{figure}[h]
\begin{center}
\begin{tikzpicture}[scale=.5]
\path[draw] (0,0)--(6,0);
\path[draw] (3,-3)--(3,3);
\path[draw] (1.35,-2.35)--(1.35,-.75);
\path[draw] (1.35,2.35)--(1.35,.75);
\path[draw] (4.65,-2.35)--(4.65,-.75);
\path[draw] (4.65,2.35)--(4.65,.75);

\path[draw] (.15,1.5)--(2.35,1.5);
\path[draw] (3.65,1.5)--(5.85,1.5);
\path[draw] (.15,-1.5)--(2.35,-1.5);
\path[draw] (3.65,-1.5)--(5.85,-1.5);

\node at (0.5,2) {$a_{1,1}$};
\node at (2.25,2) {$a_{1,4}$};
\node at (0.5,.75) {$a_{1,2}$};
\node at (2.25,.75) {$a_{1,3}$};

\node at (0.5,-2.25) {$a_{2,2}$};
\node at (2.25,-2.25) {$a_{2,3}$};
\node at (0.5,-1) {$a_{2,1}$};
\node at (2.25,-1) {$a_{2,4}$};

\node at (3.85,2) {$a_{4,1}$};
\node at (5.5,2) {$a_{4,4}$};
\node at (3.85,.75) {$a_{4,2}$};
\node at (5.5,.75) {$a_{4,3}$};

\node at (3.85,-2.25) {$a_{3,2}$};
\node at (5.5,-2.25) {$a_{3,3}$};
\node at (3.85,-1) {$a_{3,1}$};
\node at (5.5,-1) {$a_{3,4}$};

\draw[->] (7,0)--(9,0);
\node at (8,1) {$P$};

\path[draw] (10,0)--(16,0);
\path[draw] (13,-3)--(13,3);
\path[draw] (11.35,-2.35)--(11.35,-.75);
\path[draw] (11.35,2.35)--(11.35,.75);
\path[draw] (14.65,-2.35)--(14.65,-.75);
\path[draw] (14.65,2.35)--(14.65,.75);

\path[draw] (10.15,1.5)--(12.35,1.5);
\path[draw] (13.65,1.5)--(15.85,1.5);
\path[draw] (10.15,-1.5)--(12.35,-1.5);
\path[draw] (13.65,-1.5)--(15.85,-1.5);

\node at (10.5,2) {$a_{1,1}$};
\node at (12.25,2) {$a_{1,2}$};
\node at (10.5,.75) {$a_{1,4}$};
\node at (12.25,.75) {$a_{1,3}$};

\node at (10.5,-2.25) {$a_{4,4}$};
\node at (12.25,-2.25) {$a_{4,3}$};
\node at (10.5,-1) {$a_{4,1}$};
\node at (12.25,-1) {$a_{4,2}$};

\node at (13.85,2) {$a_{2,1}$};
\node at (15.5,2) {$a_{2,2}$};
\node at (13.85,.75) {$a_{2,4}$};
\node at (15.5,.75) {$a_{2,3}$};

\node at (13.85,-2.25) {$a_{3,4}$};
\node at (15.5,-2.25) {$a_{3,3}$};
\node at (13.85,-1) {$a_{3,1}$};
\node at (15.5,-1) {$a_{3,2}$};
\end{tikzpicture}
\end{center}
\caption{Reflection of a $2\times 2$ super tic-tac-toe board}\label{ref_board}
\end{figure}

\subsection{Group Action of a Super Tic-Tac-Toe Board}
Before we state our main result, we present an example to help motivate the results. 

\begin{example} Consider the $2\times 2$ super tic-tac-toe board. 
If we continue to rotate the boards, we see that there are 4 game board configurations (Figure \ref{rot_board_mult}). If we reflect the board, we see that there are 2 game board configurations (Figure \ref{ref_board_mult}).


\begin{figure}[h]
\begin{center}
\begin{tikzpicture}[scale=.45]
\path[draw] (0,0)--(6,0);
\path[draw] (3,-3)--(3,3);
\path[draw] (1.35,-2.35)--(1.35,-.75);
\path[draw] (1.35,2.35)--(1.35,.75);
\path[draw] (4.65,-2.35)--(4.65,-.75);
\path[draw] (4.65,2.35)--(4.65,.75);

\path[draw] (.15,1.5)--(2.35,1.5);
\path[draw] (3.65,1.5)--(5.85,1.5);
\path[draw] (.15,-1.5)--(2.35,-1.5);
\path[draw] (3.65,-1.5)--(5.85,-1.5);

\node at (0.5,2) {$a_{1,1}$};
\node at (2.25,2) {$a_{1,4}$};
\node at (0.5,.75) {$a_{1,2}$};
\node at (2.25,.75) {$a_{1,3}$};

\node at (0.5,-2.25) {$a_{2,2}$};
\node at (2.25,-2.25) {$a_{2,3}$};
\node at (0.5,-1) {$a_{2,1}$};
\node at (2.25,-1) {$a_{2,4}$};

\node at (3.85,2) {$a_{4,1}$};
\node at (5.5,2) {$a_{4,4}$};
\node at (3.85,.75) {$a_{4,2}$};
\node at (5.5,.75) {$a_{4,3}$};

\node at (3.85,-2.25) {$a_{3,2}$};
\node at (5.5,-2.25) {$a_{3,3}$};
\node at (3.85,-1) {$a_{3,1}$};
\node at (5.5,-1) {$a_{3,4}$};


\draw[->] (7,0)--(9,0);
\node at (8,1) {$\Sigma$};

\path[draw] (10,0)--(16,0);
\path[draw] (13,-3)--(13,3);
\path[draw] (11.35,-2.35)--(11.35,-.75);
\path[draw] (11.35,2.35)--(11.35,.75);
\path[draw] (14.65,-2.35)--(14.65,-.75);
\path[draw] (14.65,2.35)--(14.65,.75);

\path[draw] (10.15,1.5)--(12.35,1.5);
\path[draw] (13.65,1.5)--(15.85,1.5);
\path[draw] (10.15,-1.5)--(12.35,-1.5);
\path[draw] (13.65,-1.5)--(15.85,-1.5);

\node at (10.5,2) {$a_{4,4}$};
\node at (12.25,2) {$a_{4,3}$};
\node at (10.5,.75) {$a_{4,1}$};
\node at (12.25,.75) {$a_{4,2}$};

\node at (10.5,-2.25) {$a_{1,1}$};
\node at (12.25,-2.25) {$a_{1,2}$};
\node at (10.5,-1) {$a_{1,4}$};
\node at (12.25,-1) {$a_{1,3}$};

\node at (13.85,2) {$a_{3,4}$};
\node at (15.5,2) {$a_{3,3}$};
\node at (13.85,.75) {$a_{3,1}$};
\node at (15.5,.75) {$a_{3,2}$};

\node at (13.85,-2.25) {$a_{2,1}$};
\node at (15.5,-2.25) {$a_{2,2}$};
\node at (13.85,-1) {$a_{2,4}$};
\node at (15.5,-1) {$a_{2,3}$};

\draw[->] (17,0)--(19,0);
\node at (18,1) {$\Sigma$};

\path[draw] (20,0)--(26,0);
\path[draw] (23,-3)--(23,3);
\path[draw] (21.35,-2.35)--(21.35,-.75);
\path[draw] (21.35,2.35)--(21.35,.75);
\path[draw] (24.65,-2.35)--(24.65,-.75);
\path[draw] (24.65,2.35)--(24.65,.75);

\path[draw] (20.15,1.5)--(22.35,1.5);
\path[draw] (23.65,1.5)--(25.85,1.5);
\path[draw] (20.15,-1.5)--(22.35,-1.5);
\path[draw] (23.65,-1.5)--(25.85,-1.5);

\node at (20.5,2) {$a_{3,3}$};
\node at (22.25,2) {$a_{3,2}$};
\node at (20.5,.75) {$a_{3,4}$};
\node at (22.25,.75) {$a_{3,1}$};

\node at (20.5,-2.25) {$a_{4,4}$};
\node at (22.25,-2.25) {$a_{4,1}$};
\node at (20.5,-1) {$a_{4,3}$};
\node at (22.25,-1) {$a_{4,2}$};

\node at (23.85,2) {$a_{2,3}$};
\node at (25.5,2) {$a_{2,2}$};
\node at (23.85,.75) {$a_{2,4}$};
\node at (25.5,.75) {$a_{2,1}$};

\node at (23.85,-2.25) {$a_{1,4}$};
\node at (25.5,-2.25) {$a_{1,1}$};
\node at (23.85,-1) {$a_{1,3}$};
\node at (25.5,-1) {$a_{1,2}$};
\draw[->] (27,0)--(29,0);
\node at (28,1) {$\Sigma$};

\path[draw] (30,0)--(36,0);
\path[draw] (33,-3)--(33,3);
\path[draw] (31.35,-2.35)--(31.35,-.75);
\path[draw] (31.35,2.35)--(31.35,.75);
\path[draw] (34.65,-2.35)--(34.65,-.75);
\path[draw] (34.65,2.35)--(34.65,.75);

\path[draw] (30.15,1.5)--(32.35,1.5);
\path[draw] (33.65,1.5)--(35.85,1.5);
\path[draw] (30.15,-1.5)--(32.35,-1.5);
\path[draw] (33.65,-1.5)--(35.85,-1.5);

\node at (30.5,2) {$a_{2,2}$};
\node at (32.25,2) {$a_{2,1}$};
\node at (30.5,.75) {$a_{2,3}$};
\node at (32.25,.75) {$a_{2,4}$};

\node at (30.5,-2.25) {$a_{3,3}$};
\node at (32.25,-2.25) {$a_{3,4}$};
\node at (30.5,-1) {$a_{3,2}$};
\node at (32.25,-1) {$a_{3,1}$};

\node at (33.85,2) {$a_{1,2}$};
\node at (35.5,2) {$a_{1,1}$};
\node at (33.85,.75) {$a_{1,3}$};
\node at (35.5,.75) {$a_{1,4}$};

\node at (33.85,-2.25) {$a_{4,3}$};
\node at (35.5,-2.25) {$a_{4,4}$};
\node at (33.85,-1) {$a_{4,2}$};
\node at (35.5,-1) {$a_{4,1}$};
\end{tikzpicture}
\end{center}
\caption{Rotations of a $2\times 2$ super tic-tac-toe board}\label{rot_board_mult}
\end{figure}

\begin{figure}[h]
\begin{center}
\begin{tikzpicture}[scale=.5]
\path[draw] (0,0)--(6,0);
\path[draw] (3,-3)--(3,3);
\path[draw] (1.35,-2.35)--(1.35,-.75);
\path[draw] (1.35,2.35)--(1.35,.75);
\path[draw] (4.65,-2.35)--(4.65,-.75);
\path[draw] (4.65,2.35)--(4.65,.75);

\path[draw] (.15,1.5)--(2.35,1.5);
\path[draw] (3.65,1.5)--(5.85,1.5);
\path[draw] (.15,-1.5)--(2.35,-1.5);
\path[draw] (3.65,-1.5)--(5.85,-1.5);

\node at (0.5,2) {$a_{1,1}$};
\node at (2.25,2) {$a_{1,4}$};
\node at (0.5,.75) {$a_{1,2}$};
\node at (2.25,.75) {$a_{1,3}$};

\node at (0.5,-2.25) {$a_{2,2}$};
\node at (2.25,-2.25) {$a_{2,3}$};
\node at (0.5,-1) {$a_{2,1}$};
\node at (2.25,-1) {$a_{2,4}$};

\node at (3.85,2) {$a_{4,1}$};
\node at (5.5,2) {$a_{4,4}$};
\node at (3.85,.75) {$a_{4,2}$};
\node at (5.5,.75) {$a_{4,3}$};

\node at (3.85,-2.25) {$a_{3,2}$};
\node at (5.5,-2.25) {$a_{3,3}$};
\node at (3.85,-1) {$a_{3,1}$};
\node at (5.5,-1) {$a_{3,4}$};

\draw[->] (7,0)--(9,0);
\node at (8,1) {$P$};

\path[draw] (10,0)--(16,0);
\path[draw] (13,-3)--(13,3);
\path[draw] (11.35,-2.35)--(11.35,-.75);
\path[draw] (11.35,2.35)--(11.35,.75);
\path[draw] (14.65,-2.35)--(14.65,-.75);
\path[draw] (14.65,2.35)--(14.65,.75);

\path[draw] (10.15,1.5)--(12.35,1.5);
\path[draw] (13.65,1.5)--(15.85,1.5);
\path[draw] (10.15,-1.5)--(12.35,-1.5);
\path[draw] (13.65,-1.5)--(15.85,-1.5);

\node at (10.5,2) {$a_{1,1}$};
\node at (12.25,2) {$a_{1,2}$};
\node at (10.5,.75) {$a_{1,4}$};
\node at (12.25,.75) {$a_{1,3}$};

\node at (10.5,-2.25) {$a_{4,4}$};
\node at (12.25,-2.25) {$a_{4,3}$};
\node at (10.5,-1) {$a_{4,1}$};
\node at (12.25,-1) {$a_{4,2}$};

\node at (13.85,2) {$a_{2,1}$};
\node at (15.5,2) {$a_{2,2}$};
\node at (13.85,.75) {$a_{2,4}$};
\node at (15.5,.75) {$a_{2,3}$};

\node at (13.85,-2.25) {$a_{3,4}$};
\node at (15.5,-2.25) {$a_{3,3}$};
\node at (13.85,-1) {$a_{3,1}$};
\node at (15.5,-1) {$a_{3,2}$};
\end{tikzpicture}
\end{center}
\caption{Reflections of a $2\times 2$ super tic-tac-toe board}\label{ref_board_mult}
\end{figure}

\noindent Notice that we can can do any combination of rotations and reflections. By reflecting, rotating, reflecting and rotating, we end up back at the original game board: 

\begin{figure}[h]
\begin{center}
\begin{tikzpicture}[scale=.45]
\path[draw] (0,0)--(6,0);
\path[draw] (3,-3)--(3,3);
\path[draw] (1.35,-2.35)--(1.35,-.75);
\path[draw] (1.35,2.35)--(1.35,.75);
\path[draw] (4.65,-2.35)--(4.65,-.75);
\path[draw] (4.65,2.35)--(4.65,.75);

\path[draw] (.15,1.5)--(2.35,1.5);
\path[draw] (3.65,1.5)--(5.85,1.5);
\path[draw] (.15,-1.5)--(2.35,-1.5);
\path[draw] (3.65,-1.5)--(5.85,-1.5);

\node at (0.5,2) {$a_{1,1}$};
\node at (2.25,2) {$a_{1,4}$};
\node at (0.5,.75) {$a_{1,2}$};
\node at (2.25,.75) {$a_{1,3}$};

\node at (0.5,-2.25) {$a_{2,2}$};
\node at (2.25,-2.25) {$a_{2,3}$};
\node at (0.5,-1) {$a_{2,1}$};
\node at (2.25,-1) {$a_{2,4}$};

\node at (3.85,2) {$a_{4,1}$};
\node at (5.5,2) {$a_{4,4}$};
\node at (3.85,.75) {$a_{4,2}$};
\node at (5.5,.75) {$a_{4,3}$};

\node at (3.85,-2.25) {$a_{3,2}$};
\node at (5.5,-2.25) {$a_{3,3}$};
\node at (3.85,-1) {$a_{3,1}$};
\node at (5.5,-1) {$a_{3,4}$};


\draw[->] (7,0)--(9,0);
\node at (8,1) {$\Sigma P$};

\path[draw] (10,0)--(16,0);
\path[draw] (13,-3)--(13,3);
\path[draw] (11.35,-2.35)--(11.35,-.75);
\path[draw] (11.35,2.35)--(11.35,.75);
\path[draw] (14.65,-2.35)--(14.65,-.75);
\path[draw] (14.65,2.35)--(14.65,.75);

\path[draw] (10.15,1.5)--(12.35,1.5);
\path[draw] (13.65,1.5)--(15.85,1.5);
\path[draw] (10.15,-1.5)--(12.35,-1.5);
\path[draw] (13.65,-1.5)--(15.85,-1.5);

\node at (10.5,2) {$a_{2,2}$};
\node at (12.25,2) {$a_{2,3}$};
\node at (10.5,.75) {$a_{2,1}$};
\node at (12.25,.75) {$a_{2,4}$};

\node at (10.5,-2.25) {$a_{1,1}$};
\node at (12.25,-2.25) {$a_{1,4}$};
\node at (10.5,-1) {$a_{1,2}$};
\node at (12.25,-1) {$a_{1,3}$};

\node at (13.85,2) {$a_{3,2}$};
\node at (15.5,2) {$a_{3,3}$};
\node at (13.85,.75) {$a_{3,1}$};
\node at (15.5,.75) {$a_{3,4}$};

\node at (13.85,-2.25) {$a_{4,1}$};
\node at (15.5,-2.25) {$a_{4,4}$};
\node at (13.85,-1) {$a_{4,2}$};
\node at (15.5,-1) {$a_{4,3}$};

\draw[->] (17,0)--(19,0);
\node at (18,1) {$\Sigma P$};

\path[draw] (20,0)--(26,0);
\path[draw] (23,-3)--(23,3);
\path[draw] (21.35,-2.35)--(21.35,-.75);
\path[draw] (21.35,2.35)--(21.35,.75);
\path[draw] (24.65,-2.35)--(24.65,-.75);
\path[draw] (24.65,2.35)--(24.65,.75);

\path[draw] (20.15,1.5)--(22.35,1.5);
\path[draw] (23.65,1.5)--(25.85,1.5);
\path[draw] (20.15,-1.5)--(22.35,-1.5);
\path[draw] (23.65,-1.5)--(25.85,-1.5);

\node at (20.5,2) {$a_{1,1}$};
\node at (22.25,2) {$a_{1,4}$};
\node at (20.5,.75) {$a_{1,2}$};
\node at (22.25,.75) {$a_{1,3}$};

\node at (20.5,-2.25) {$a_{2,2}$};
\node at (22.25,-2.25) {$a_{2,3}$};
\node at (20.5,-1) {$a_{2,1}$};
\node at (22.25,-1) {$a_{2,4}$};

\node at (23.85,2) {$a_{4,1}$};
\node at (25.5,2) {$a_{4,4}$};
\node at (23.85,.75) {$a_{4,2}$};
\node at (25.5,.75) {$a_{4,3}$};

\node at (23.85,-2.25) {$a_{3,2}$};
\node at (25.5,-2.25) {$a_{3,3}$};
\node at (23.85,-1) {$a_{3,1}$};
\node at (25.5,-1) {$a_{3,4}$};
\end{tikzpicture}
\end{center}
\caption{A rotation and reflection repeated twice of a $2\times 2$ super tic-tac-toe board}\label{rot_ref}
\end{figure}
\noindent We conclude that the group generated by $D_4=\langle \Sigma, P| ~~(\Sigma P)^2=\Sigma^4=P^2=1 \rangle$ acts on the $2\times 2$ game boards of super tic-tac-toe.

\end{example}

We are now ready to state our main result. 
\begin{theorem}\label{dihedral}
The dihedral group $D_m$ acts on the set of $n$-boards $\mathcal{B}_n$ where 

\begin{itemize}
\item $m$=LCM$(4n-4, 4n-12, 4n-20,\cdots, 4)$, if $n$ is even. 
\item $m$=LCM$(4n-4, 4n-12,\cdots 8, 1)$, if $n$ is odd.
\end{itemize}
\end{theorem}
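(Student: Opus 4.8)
The plan is to identify the subgroup $G:=\langle\Sigma,P\rangle\le\mathrm{Sym}(\mathcal B_n)$ with $D_m$; once this is done, $G$ acting on $\mathcal B_n$ gives the asserted action of $D_m$ on $\mathcal B_n$. For a permutation $\tau$ of $\{1,\dots,n^2\}$ write $T_\tau\colon\mathcal B_n\to\mathcal B_n$ for the map $[a_{i,j}]\mapsto[a_{\tau(i),\tau(j)}]$, so that $\Sigma=T_\sigma$ and $P=T_\rho$ by definition; a one-line check gives the composition rule $T_\tau\circ T_\mu=T_{\mu\tau}$, so $\tau\mapsto T_\tau$ is an anti-homomorphism (which does no harm here). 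First I would record the three dihedral relations at the level of the index permutations $\sigma$ and $\rho$. Because the $l=\lfloor(n+1)/2\rfloor$ level sets partition $\{1,\dots,n^2\}$, the layer permutations $\sigma_1,\dots,\sigma_l$ have pairwise disjoint supports and hence commute, so $\mathrm{ord}(\sigma)=\mathrm{lcm}_k\mathrm{ord}(\sigma_k)$; each $\sigma_k$ is a cycle of length $N_k$, the cardinality of the $k$-th level set, and substituting the level-set cardinality formulas ($N_k=8k-4$ for $n$ even; $N_1=1$ and $N_k=8(k-1)$ for $n$ odd) reproduces exactly the two $\mathrm{lcm}$ expressions in the statement, so $\mathrm{ord}(\sigma)=m$. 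Similarly $\rho^2=\prod_k\rho_k^2=1$. Identifying the $k$-th level set $[k_0,\dots,k_j]$ with $\mathbb Z/N_k$ via $k_i\leftrightarrow i$ turns $\sigma_k$ into $i\mapsto i+1$ and $\rho_k$ into $i\mapsto -i$, so $\rho_k\sigma_k\rho_k=\sigma_k^{-1}$; multiplying over the disjoint layers gives $\rho\sigma\rho=\sigma^{-1}$.

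Transporting these through $\tau\mapsto T_\tau$ yields $\Sigma^m=T_{\sigma^m}=\mathrm{id}$, $P^2=T_{\rho^2}=\mathrm{id}$, and $P\Sigma P=T_{\rho\sigma\rho}=T_{\sigma^{-1}}=\Sigma^{-1}$, i.e.\ $(\Sigma P)^2=\mathrm{id}$. By the universal property of the presentation $D_m=\langle r,s\mid r^m=s^2=(rs)^2=1\rangle$ (sending $r\mapsto\Sigma$, $s\mapsto P$) there is a surjection $D_m\twoheadrightarrow G$, so $|G|$ divides $2m$. It then remains to rule out any collapse, i.e.\ to show $|G|=2m$. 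It suffices to prove (i) $\Sigma$ has order exactly $m$, and (ii) $P\notin\langle\Sigma\rangle$: then $\langle\Sigma\rangle\le G$ has order $m$ while $G\supsetneq\langle\Sigma\rangle$, and since the only divisor of $2m$ strictly larger than $m$ is $2m$ itself, the surjection is forced to be an isomorphism $G\cong D_m$.

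To prove (i) and (ii) I would probe with one-symbol boards. Evaluating $\Sigma^t$ on the board that carries a single $X$ in position $(i_0,j_0)$ and is empty elsewhere shows that $\Sigma^t=\mathrm{id}$ forces $\sigma^t=\mathrm{id}$, hence $\mathrm{ord}(\Sigma)=\mathrm{ord}(\sigma)=m$; the same boards show that $P\in\langle\Sigma\rangle$ would force $\rho\in\langle\sigma\rangle$. But pick a layer $h$ with $N_h\ge4$ (one exists whenever $n\ge2$): on level set $h$ every element of $\langle\sigma\rangle$ restricts to a power of the $N_h$-cycle $\sigma_h$, which is either the identity or fixed-point-free, whereas $\rho_h$ acts on those $N_h\ge4$ points with exactly two fixed points---a contradiction, establishing (ii).

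I expect the main obstacle to lie in this last step rather than in any single hard computation. Because the board entries take only the three values (empty, $X$, $O$), one cannot read the underlying index permutation off a board, so the facts about $\sigma$ and $\rho$---their orders and the non-membership $\rho\notin\langle\sigma\rangle$---have to be transferred to $\Sigma$ and $P$ by testing against well-chosen configurations, and one must keep straight the direction of the anti-homomorphism $\tau\mapsto T_\tau$. By contrast, the layer-wise relation $\rho_k\sigma_k\rho_k=\sigma_k^{-1}$ and the equality $\mathrm{ord}(\sigma)=m$ are routine once the $\mathbb Z/N_k$ model is set up, and matching the cardinality formulas to the two $\mathrm{lcm}$ expressions is a direct substitution in each parity case.
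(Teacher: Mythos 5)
Your proposal is correct, and its core is the same as the paper's: decompose $\sigma$ and $\rho$ into layer permutations with pairwise disjoint supports, model layer $k$ as $\mathbb{Z}/N_k$ with $\sigma_k\colon i\mapsto i+1$ and $\rho_k\colon i\mapsto -i$, read $\mathrm{ord}(\sigma)=m$ off the level-set cardinalities, and verify the dihedral relation (your $\rho\sigma\rho=\sigma^{-1}$ is exactly the paper's computation that $(\sigma\rho)^2$ fixes each $\alpha$ in layer $k$). Where you genuinely go beyond the paper is the no-collapse step: the paper stops after checking $\Sigma^m=P^2=(\Sigma P)^2=1$ and appeals to the presentation, which strictly speaking only produces a homomorphism $D_m\to\mathrm{Sym}(\mathcal{B}_n)$ --- enough for the literal statement that $D_m$ acts, but not for identifying $\langle\Sigma,P\rangle$ with $D_m$. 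Your probe with one-symbol boards (forcing $\mathrm{ord}(\Sigma)=\mathrm{ord}(\sigma)=m$ and reducing $P\in\langle\Sigma\rangle$ to $\rho\in\langle\sigma\rangle$), combined with the observation that $\rho_h$ has exactly two fixed points on a layer of size $N_h\ge 4$ while nontrivial powers of the $N_h$-cycle $\sigma_h$ have none, supplies the missing injectivity, so you prove the stronger statement that the action realizes $D_m$ faithfully. You are also more careful than the paper about the contravariance $T_\tau\circ T_\mu=T_{\mu\tau}$, which, as you note, is harmless since only orders, inverses, and the dihedral relation are transported. The only untreated edge case is the degenerate board $n=1$, which the paper ignores as well.
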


\begin{proof} Let $[a_{1,1},\dots, a_{1,n^2}, a_{2,1}, \dots a_{2,n^2}, \dots a_{n^2,1}, \dots a_{n^2, n^2}]$ represent $B_n\in\mathcal{B}_n$ and let $\Sigma$ and $P$ be defined as above.

Let $m$ be the order of the rotation $\Sigma$. Then $m$ is the  least common multiple of the orders of each cycle $\sigma_k$ for $k\in [1,l]$, where $l=\displaystyle{\left\lfloor\dfrac{n+1}{2}\right\rfloor}$. If $n$ is even, then the order of $\sigma_k$ is $4+8(k-1)$. If $n$ is odd, then the order of $\sigma_k$ is  

$$o(\sigma_k)=\begin{cases}
1, &k=1\\
8(k-1)& k\neq 1.
\end{cases}$$

The order of the reflection $P$ is 2 since $P$ is a product of transpositions. If we show that $(\Sigma P)^2=1$, then we've proved our theorem since $D_m$ has a presentation given by $\langle \Sigma,P| \Sigma^m=P^2=(\Sigma P)^2=1 \rangle$. We can write:

\begin{align}\label{equation}(\Sigma P)^2([a_{1,1},\dots, &a_{1,n^2}, a_{2,1}, \dots a_{2,n^2}, \dots a_{n^2,1}, \dots a_{n^2, n^2}])\\
&=[a_{(\sigma\rho)^2(1),(\sigma\rho)^2(1)}, \dots, a_{(\sigma\rho)^2(n^2), (\sigma\rho)^2(n^2)}].\nonumber\end{align}

Now, \begin{align*}(\sigma \rho)^2=(\sigma_1\sigma_2\sigma_3\cdots\sigma_l\rho_1\rho_2\rho_3\cdots\rho_l)^2=(\sigma_1   \rho_1 \sigma_1\rho_1)\dots (\sigma_l   \rho_l \sigma_l\rho_l),\end{align*} 

\noindent since $\sigma_k$ and  $\rho_j$ are disjoint, if $k\neq j$. 

\noindent Now, Suppose $\alpha$ is in layer $k$ of a field (or board). Then, 

\begin{align*}
(\sigma\rho)^2\alpha&=\sigma_k\rho_k\sigma_k\rho_k(\alpha)\\&=\sigma_k\rho_k\sigma_k(-\alpha)\\&=\sigma_k\rho_k(-\alpha+1)\\&=\sigma_k(\alpha-1)\\&=\alpha. \\
\end{align*}

\noindent Thus can rewrite Equation (\ref{equation}) as 

\begin{align*}(\Sigma P)^2([a_{1,1},\dots, &a_{1,n^2}, a_{2,1}, \dots a_{2,n^2}, \dots a_{n^2,1}, \dots a_{n^2, n^2}])\\ 
&=[a_{(\sigma\rho)^2(1),(\sigma\rho)^2(1)}, \dots , a_{(\sigma\rho)^2(n^2),(\sigma\rho)^2(n^2)}]\\
&=[a_{1,1},\dots, a_{1,n^2}, a_{2,1}, \dots a_{2,n^2}, \dots a_{n^2,1}, \dots a_{n^2, n^2}].
\end{align*}

Therefore, $(\Sigma P)^2=1$ proving the result. 	
\end{proof}

\subsection{Group Actions of Super Tic-Tac-Toe Games}

A game, $g$, of impartial super tic-tac-toe can be represented by a list $g=[\alpha_{i_1,j_1}, \alpha_{i_2,j_2}, \dots, \alpha_{i_{r},j_{r}}]$, where $\alpha_{i_l, j_l}$ is the position of the $l^{th}$ move in the game. Recall that the current player's move determines the $i$-Field that the next player is allowed to play in. Consider the $l^{th}$ move of the game, $\alpha_{i_l,j_l}$. By the rules of our game, $j_l$ will dictate $i_{l+1}$. That is, if $F_{j_l}$ is not already won, then $i_{l+1}=j_l$. If $F_{j_l}$ is already won, then $i_{l+1} \neq j_l$.

We define the group action of games in a similar manner to the group action on $n$-boards. Let $g=[\alpha_{i_1,j_1}, \alpha_{i_2,j_2}, \dots, \alpha_{i_{r},j_{r}}]$ represent a game of super tic-tac-toe and $G$ be the set of all games. A {\it rotation} of a game, $g$ is given by: $$\Sigma([\alpha_{i_1,j_1}, \alpha_{i_2,j_2}, \dots, \alpha_{i_r,j_r}])=[\alpha_{\sigma(i_1),\sigma(j_1)}, \alpha_{\sigma(i_2),\sigma(j_2)}, \dots, \alpha_{\sigma(i_r),\sigma(j_r)}].$$

A {\it reflection} of a game $g$ is given by: $$P([\alpha_{i_1,j_1}, \alpha_{i_2,j_2}, \dots, \alpha_{i_r,j_r}])=[\alpha_{\rho(i_1),\rho(j_1)}, \alpha_{\rho(i_2),\rho(j_2)}, \dots, \alpha_{\rho(i_r),\rho(j_r)}].$$

We need to check that the game structure and rules will hold for STTT boards under the group actions. Since we rotate or reflect both the game board as well as the $i$-fields, it is easy to see that the $l-1$ move will still dictate the correct $i$-field for the $l^{th}$ move when there are possible moves in that $i$-field. It remains to show that if the $i$-field for the $l^{th}$ move is won, then a move is available in another $i$-field that is not affected by a rotation or reflection.

\begin{theorem}Games are preserved under the $D_m$ group action on STTT boards. 
\end{theorem}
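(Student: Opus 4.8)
The plan is to show that $\Sigma$ and $P$ each carry a valid game to a valid game; the group relations $\Sigma^m = P^2 = (\Sigma P)^2 = 1$ then transfer verbatim from Theorem~\ref{dihedral}, since the action on games is induced by the very same index permutations $\sigma,\rho$, so $D_m$ acts on the set $G$ of games. As observed just before the statement, $\Sigma$ and $P$ permute the field-indices $i_l$ and the within-field position-indices $j_l$ by the \emph{same} permutation ($\sigma$ or $\rho$); hence the rule ``move $l$ must be played in $F_{j_{l-1}}$'' is automatically respected after transforming whenever that field still has an empty cell, because $\sigma(i_l)=\sigma(j_{l-1})$ exactly when $i_l=j_{l-1}$. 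So the only thing left to check is the free-move rule: when move $l$ lands in a field $F_{i_l}$ with $i_l\ne j_{l-1}$, legality of that move rests on $F_{j_{l-1}}$ already being won, and after transforming we must know that $F_{\sigma(j_{l-1})}$ (resp.\ $F_{\rho(j_{l-1})}$) is also already won, and that $F_{\sigma(i_l)}$ still admits a move.

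The key observation --- call it equivariance of won-ness --- is that a single $n\times n$ field is won if and only if its image under $\sigma$ or $\rho$ is won. This is because $\sigma$, built layer by layer, is just the $90^\circ$ counterclockwise rotation of the $n\times n$ cell-grid: it carries rows to columns, columns to rows, and the main diagonal and anti-diagonal to one another, so it maps the set of $n$-in-a-row lines bijectively onto itself; it also maps empty cells to empty cells and marked cells to marked cells, so it preserves the cell-counts used for tie-breaking when a field is full with no line. The same holds for $\rho$. Consequently the won/not-won status of $F_k$ after any prefix of moves of $g$ equals the status of $F_{\sigma(k)}$ (resp.\ $F_{\rho(k)}$) after the transformed prefix, and ``has an available cell'' is likewise a field-property preserved by $\sigma$ and $\rho$.

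With this in hand I would run an induction on the move number $l$, with inductive hypothesis: the transformed list $[\alpha_{\sigma(i_1),\sigma(j_1)},\dots,\alpha_{\sigma(i_l),\sigma(j_l)}]$ is a legal partial game, and for every $k$ the status of $F_{\sigma(k)}$ in the transformed game after $l$ moves equals the status of $F_k$ in $g$ after $l$ moves (similarly for $P$). For the step, move $l+1$ is either forced into $F_{j_l}$, in which case the transformed move is in $F_{\sigma(j_l)}$, which by the hypothesis has the correct status and hence admits (or requires) the move; or it is a free move, which by validity of $g$ means $F_{j_l}$ was won, so $F_{\sigma(j_l)}$ is won in the transformed game and a free move is permitted there, while $F_{\sigma(i_{l+1})}$ has an available cell because $F_{i_{l+1}}$ does. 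Placing the mark changes only the field $F_{i_{l+1}}$ (resp.\ $F_{\sigma(i_{l+1})}$), and by equivariance of won-ness the two updated statuses agree again, so the hypothesis is restored.

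Collecting the cases gives $\Sigma(g),P(g)\in G$ for every $g\in G$, hence $D_m$ acts on $G$. The one genuine subtlety I expect is pinning down what ``won'' should mean for a field in the \emph{impartial} misère game --- in particular the tie-breaking clause, where every mark is an $X$ --- and confirming that whatever state one assigns to a field is a function of the multiset of occupied cells; once that is granted, equivariance is immediate from the line- and cardinality-preservation above, and the induction is routine bookkeeping.
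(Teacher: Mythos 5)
Your proposal is correct, but it proves the theorem along a genuinely different route than the paper does. The paper's proof is a short injectivity argument: assuming the first $l-1$ moves are forced and the $l^{th}$ move is free (so $i_{l+1}\neq j_l$), it notes $\sigma(j_m)=\sigma(i_{m+1})$ for the forced moves, and then argues by contradiction that the transformed position $\alpha_{\sigma(i_{l+1}),\sigma(j_{l+1})}$ cannot already be occupied, since $(\sigma(i_{l+1}),\sigma(j_{l+1}))=(\sigma(i_p),\sigma(j_p))$ would force $p=l+1$; the same is asserted for $P$. You instead isolate an equivariance lemma --- a field $F_k$ is won (or full, or has an empty cell) after a prefix of $g$ if and only if $F_{\sigma(k)}$ (resp.\ $F_{\rho(k)}$) has the same status after the transformed prefix, because $\sigma$ and $\rho$ act on each field as a rigid rotation/reflection carrying the $n$-in-a-row lines bijectively to themselves and preserving occupancy --- and then propagate legality move by move by induction, treating the forced-move and free-move cases separately. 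Your route is longer but more faithful to the actual rules: the free-move rule is legal only when the destination field $F_{j_l}$ is \emph{won}, and the paper's argument checks only that the image cell is unoccupied, implicitly taking for granted exactly the won-ness equivariance you prove; so your lemma fills in the step the paper waves at in the sentence preceding the theorem (``it remains to show that if the $i$-field for the $l^{th}$ move is won\dots''). The paper's approach buys brevity, and its injectivity observation is the minimal fact needed to rule out collisions of transformed moves; your approach buys a complete verification that the transformed sequence satisfies every clause of the rules, at the cost of having to fix a precise meaning of ``won'' (including the tie-break) as a function of the occupied cells, which you rightly flag as the only delicate point.
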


\begin{proof} Using the STTT rules, we need to verify that $\Sigma([\alpha_{i_1,j_1}, \alpha_{i_2,j_2}, \dots, \alpha_{i_r,j_r}])$ and 

\noindent$P([\alpha_{i_1,j_1}, \alpha_{i_2,j_2}, \dots, \alpha_{i_r,j_r}])$ are valid game boards. Suppose we have a game, 

\noindent$g=[\alpha_{i_1,j_1}, \alpha_{i_2,j_2}, \dots, \alpha_{i_r,j_r}]$ whose first $l-1$  moves are into fields with available positions, but whose $l^{th}$ move is into a field that has already been filled. Then we have $i_{l+1}\neq j_{l}$. Consider the rotation of $g$: 

$$\Sigma([\alpha_{i_1,j_1}, \alpha_{i_2,j_2}, \dots, \alpha_{i_r,j_r}])=[\alpha_{\sigma(i_1),\sigma(j_1)}, \alpha_{\sigma(i_2),\sigma(j_2)}, \dots, \alpha_{\sigma(i_r),\sigma(j_r)}]$$

\noindent We have $\sigma(j_m)=\sigma(i_{m+1})$ for $m\in[1,l-1]$. So, the game structure is preserved for the first $l-1$ moves. However, $\sigma(j_{l})\neq\sigma(i_{l+1})$. We need to verify that this $l^{th}$ move is allowed. In terms of game play, this means that if a player has played in position $\alpha_{\sigma(i_{l}), \sigma(j_{l})}$, then the next player can play in position $\alpha_{\sigma(i_{l+1}), \sigma(j_{l+1})}$. 

Suppose not. Then, this means that $\alpha_{\sigma(i_{l+1}), \sigma(j_{l+1})}$ has already been played. Therefore, there exists a $p\in[1,l-1]$ so that $\alpha_{\sigma(i_{l+1}), \sigma(j_{l+1})}=\alpha_{\sigma(i_{p}), \sigma(j_{p})}$. Hence $(\sigma(i_{l+1}),\sigma(j_{l+1}))=(\sigma(i_{p}),\sigma(j_{p}))\Rightarrow l+1=p$. However, this is a contradiction since $p<l+1$. Therefore, $\Sigma([\alpha_{i_1,j_1}, \alpha_{i_2,j_2}, \dots, \alpha_{i_r,j_r}])\in G$. A similar argument holds for $P$. 
\end{proof}

\section{Applications and Further Questions}
The $D_m$ action on games and boards allows us to group together games which have similar structures. For a game $g=[\alpha_{i_1,j_1}, \alpha_{i_2,j_2}, \dots, \alpha_{i_r,j_r}]$ in an $n\times n$ super tic-tac-toe board, we are able to keep track of the order in which the plays are made. If we {\it only} look at the winning game, where an $X$ has been played in each of the positions $\alpha_{i_1,j_1}, \alpha_{i_2,j_2}, \dots, \alpha_{i_r,j_r}$, then it is not always possible to determine the order in which the plays were made.

\begin{example}\label{example}
Let's consider the game $[\alpha_{3,1},\alpha_{1,1},\alpha_{1,3},\alpha_{3,3}]$ played on a $2\times 2$ super tic-tac-toe board. The rotation $\sigma=(1,2,3,4)$ and reflection $\rho=(2,4)$ act on the $2\times 2$ square and we have a corresponding group action by $D_4$ on our $2\times 2$ super tic-tac-toe games. The orbit of $[\alpha_{3,1},\alpha_{1,1},\alpha_{1,3},\alpha_{3,3}]$ consists of

$$\{[\alpha_{3,1},\alpha_{1,1},\alpha_{1,3},\alpha_{3,3}],~[\alpha_{1,3},\alpha_{3,3},\alpha_{3,1},\alpha_{1,1}],~[\alpha_{4,2},\alpha_{2,2},\alpha_{2,4},\alpha_{4,4}],~[\alpha_{2,4},\alpha_{4,4},\alpha_{4,2},\alpha_{2,2}]\}.$$

If we {\it only} care about the structure of the winning game board and not the order in which the game is played, then we can make the observation that both $[\alpha_{3,1},\alpha_{1,1},\alpha_{1,3},\alpha_{3,3}]$ and $[\alpha_{1,3},\alpha_{3,3},\alpha_{3,1},\alpha_{1,1}]$ represent the game to the left of Figure \ref{order2}. Similarly, the games $[\alpha_{4,2},\alpha_{2,2},\alpha_{2,4},\alpha_{4,4}]$ and $[\alpha_{2,4},\alpha_{4,4},\alpha_{4,2},\alpha_{2,2}]$ represent the game to the right of Figure \ref{order2}. 

\begin{figure}[h!]
\begin{center}\includegraphics[scale=.35]{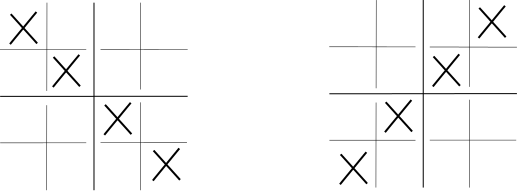}\end{center}
\caption{Two games on a $2\times 2$ board}%
\label{order2}%
\end{figure} 
\end{example}  

We say that two games $g_1$ and $g_2$ on an $n\times n$ super tic-tac-toe board are {\it isomorphic} if there exists an element $\gamma\in D_m$ so that $\gamma (g_1)=g_2$. In the last example, we are able to observe that there are 4 elements in the isomorphism class of $[\alpha_{3,1},\alpha_{1,1},\alpha_{1,3},\alpha_{3,3}]$.

If we forget about the order in which the $X$'s are played and only look at the structure of a winning game, we can talk about isomorphism classes of winning {\it game boards}. 

\begin{definition} Let $\bar{g}\in\mathcal{B}_n$ be the game board for a winning game $g$ in $n\times n$ super tic-tac-toe. The game boards $\bar{g}$ and $\bar{g'}$ are isomorphic if $\exists~ \gamma\in D_m$ so that $\gamma(\bar{g})=\bar{g'}$. We say that $\bar{g}$ and $\bar{g'}$ belong to the same isomorphism class. 
\end{definition} 

\noindent The two game boards in Figure \ref{order2} represent the isomorphism class for the game board for $g=[\alpha_{3,1},\alpha_{1,1},\alpha_{1,3},\alpha_{3,3}]$.   If we continue to look at winning game boards for $2\times 2$ super tic-tac-toe, we find the following: 

\begin{itemize}\item There is 1 isomorphism class of game boards with 2 elements. This is precisely the class represented in Figure \ref{order2}. 

\item There are 19 isomorphism classes of game boards with 4 elements. 

\item There are 228 isomorphism classes of game boards with 8 elements. 
\end{itemize}

\noindent A complete list of winning $2\times 2$ super tic-tac-toe game board isomorphism classes was developed through a python code and can be found in Appendix A. 

\begin{figure}[h!]
\begin{center}\includegraphics[scale=.35]{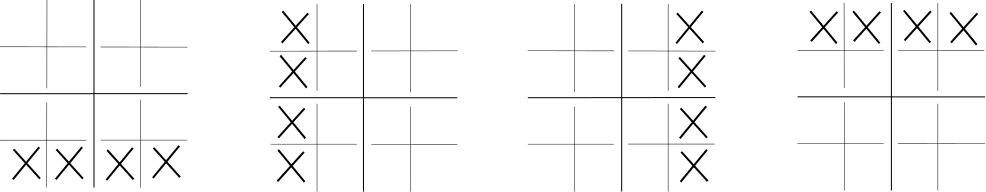}\end{center}
\caption{An example of an isomorphism class with 4 elements  for a $2\times 2$ board}%
\label{order4}%
\end{figure}

\begin{figure}[h!]
\begin{center}\includegraphics[scale=.35]{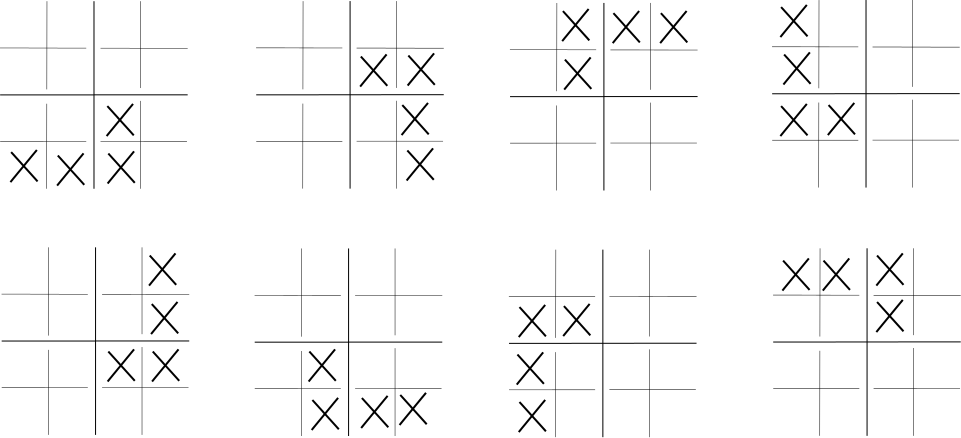}\end{center}
\caption{An example of an isomorphism class with 8 elements  for a $2\times 2$ board}%
\label{order8}%
\end{figure}

\subsection{Questions}
It is natural to try to extend finding isomorphism classes to larger super tic-tac-toe boards. Once this is done, it would be interesting to see if there are any patterns in how the size of the isomorphism classes grow as $n$ grows. We could also look at how the number of winning games with a certain number, say the largest number of possible moves to get a winning game, changes as $n$ increases.

Another interesting direction to take this work is towards other variations of tic-tac-toe. For instance, could a three dimensional tic-tac-toe game support a ``super'' version? In the regular $3\times 3\times 3$ case, the first player can easily when by playing in the middle. Would the same hold for a super $3\times 3\times 3$ game of tic-tac-toe? In Qubic, a $4\times 4\times 4$ version of tic-tac-toe, player 1 can force a win \cite{oren}. Is this still true for ``Super Qubic''? Tic-tac-toe has been played on a torus. What about super tic-tac-toe?

\appendix
\section{Isomorphism Classes for $2\times 2$ STTT Boards}

There is 1 isomorphism class of order 2, 1 isomorphism classes of order 4 and 228 isomorphism classes of order 8. Below are the isomorphism classes. Note that in each list, a zero in the $a_{i,j}$ spot corresponds to nothing in the $a_{i,j}$ position in the $2$-board and a 1 in the $a_{i,j}$ spot corresponds to an $x$ in the $a_{i,j}$ position in the $2$-board where $$(a_{1,1} a_{1,2} a_{1,3} a_{1,4} a_{2,1} a_{2,2} a_{2,3} a_{2,4} a_{3,1} a_{3,2} a_{3,3} a_{3,4} a_{4,1} a_{4,2} a_{4,3} a_{4,4})$$ corresponds to the board:

\begin{figure}[h]
\begin{center}
\begin{tikzpicture}[scale=.5]
\path[draw] (0,0)--(6,0);
\path[draw] (3,-3)--(3,3);
\path[draw] (1.35,-2.35)--(1.35,-.75);
\path[draw] (1.35,2.35)--(1.35,.75);
\path[draw] (4.65,-2.35)--(4.65,-.75);
\path[draw] (4.65,2.35)--(4.65,.75);

\path[draw] (.15,1.5)--(2.35,1.5);
\path[draw] (3.65,1.5)--(5.85,1.5);
\path[draw] (.15,-1.5)--(2.35,-1.5);
\path[draw] (3.65,-1.5)--(5.85,-1.5);

\node at (0.5,2) {$a_{1,1}$};
\node at (2.25,2) {$a_{1,2}$};
\node at (0.5,.75) {$a_{1,3}$};
\node at (2.25,.75) {$a_{1,4}$};

\node at (0.5,-2.25) {$a_{3,3}$};
\node at (2.25,-2.25) {$a_{3,4}$};
\node at (0.5,-1) {$a_{3,1}$};
\node at (2.25,-1) {$a_{3,2}$};

\node at (3.85,2) {$a_{2,1}$};
\node at (5.5,2) {$a_{2,2}$};
\node at (3.85,.75) {$a_{2,3}$};
\node at (5.5,.75) {$a_{2,4}$};

\node at (3.85,-2.25) {$a_{4,3}$};
\node at (5.5,-2.25) {$a_{4,4}$};
\node at (3.85,-1) {$a_{4,1}$};
\node at (5.5,-1) {$a_{4,2}$};
\end{tikzpicture}
\caption{$2\times2$ STTT Board}\label{2STTT}
\end{center}
\end{figure}

\noindent{\underline{{\it{Isomorphism of Order 2: }}}}
\begin{enumerate}
	\item (0000011001100000, 1001000000001001)
\end{enumerate}

\noindent{\underline{\it{Isomorphism of Order 4: }}}
\begin{enumerate}
\item (0000000000110011, 0000010100000101, 1010000010100000, 1100110000000000) 
	
\item (0010000100110011, 0100010100010101, 1010100010100010, 1100110010000100)
	
\item (0010000110100101, 0100110000010011, 1010010110000100, 1100100000110010) 
	
\item (0010010110100100, 0100110000110010, 1010000110000101, 1100100000010011) 
	
\item (0001001000110011, 0001010101000101, 1010001010101000, 1100110001001000)
	
\item (0001001010100101, 0001110001000011, 1010010101001000, 1100001000111000) 
	
\item (0010110000110100, 0100010110100010, 1010100000010101, 1100000110000011) 
	
\item (0010011001100010, 0100011001100100, 1001000100011001, 1001100010001001) 
	
\item (0010011001100100, 0100011001100010, 1001000110001001, 1001100000011001) 
	
\item (0001001001101001, 0001011001001001, 1001001001101000, 1001011001001000) 
	
\item (0010100101100010, 0100011010010100, 0110000100011001, 1001100010000110) 
	
\item (0010000101101001, 0100011000011001, 1001011010000100, 1001100001100010) 
	
\item (0010011010010010, 0100100101100100, 0110100010001001, 1001000100010110) 
	
\item (0010000110010110, 0100100100010110, 0110100010010010, 0110100110000100) 
	
\item (0001001001011010, 0001001101001100, 0011001011001000, 0101101001001000)
 
\item (0010001111000100, 0011000110001100, 0100101001010010, 0101100000011010) 
	
\item (0010000101011010, 0011100011000010, 0100001100011100, 0101101010000100) 
	
\item (0010101001010100, 0011100000011100, 0100001111000010, 0101000110001010) 
	
\item (0001001010010110, 0001100101000110, 0110001010011000, 0110100101001000)
\end{enumerate}

\noindent{\underline{\it{Isomorphism of Order 8: }}}

\begin{enumerate}
\item (0000000000111010, 0000000001010011, 0000001100000101, 0000010100001100, 

0011000010100000, 0101110000000000, 1010000011000000, 1100101000000000)

\item (0000000000110110, 0000000010010011, 0000010100000110, 0000100100000101,

 0110000010100000, 0110110000000000, 1010000010010000, 1100100100000000) 

\item (0000000100110011, 0000010100010101, 0010000000110011, 0100010100000101,

 1010000010100010, 1010100010100000, 1100110000000100, 1100110010000000) 

\item (0000000101010011, 0000001100010101, 0010000000111010, 0011000010100010,

 0100010100001100, 0101110000000100, 1010100011000000, 1100101010000000) 

\item (0000000110010011, 0000100100010101, 0010000000110110, 0100010100000110,

 0110000010100010, 0110110000000100, 1010100010010000, 1100100110000000) 

\item (0000000101010101, 0000001100010011, 0010000010101010, 0011000000110010,

 0100110000001100, 0101010100000100, 1010101010000000, 1100100011000000) 

\item (0000000100110101, 0000010100010011, 0010000010100011, 0100110000000101, 

1010000000110010, 1010110010000000, 1100010100000100, 1100100010100000) 

\item (0000000110010101, 0000100100010011, 0010000010100110, 0100110000000110,

 0110000000110010, 0110010100000100, 1010100110000000, 1100100010010000) 

\item (0001000100110011, 0001010100010101, 0010001000110011, 0100010101000101,

 1010001010100010, 1010100010101000, 1100110001000100, 1100110010001000) 

\item (0001000101010011, 0001001100010101, 0010001000111010, 0011001010100010,

 0100010101001100, 0101110001000100, 1010100011001000, 1100101010001000) 

\item (0001000110010011, 0001100100010101, 0010001000110110, 0100010101000110,

 0110001010100010, 0110110001000100, 1010100010011000, 1100100110001000) 

\item (0001000101010101, 0001001100010011, 0010001010101010, 0011001000110010, 

0100110001001100, 0101010101000100, 1010101010001000, 1100100011001000) 

\item (0001000100110101, 0001010100010011, 0010001010100011, 0100110001000101,

 1010001000110010, 1010110010001000, 1100010101000100, 1100100010101000) 

\item (0001000110010101, 0001100100010011, 0010001010100110, 0100110001000110,

 0110001000110010, 0110010101000100, 1010100110001000, 1100100010011000) 

\item (0010001100110010, 0010101010100010, 0011000100010011, 0100010101010100,

 0100110011000100, 0101000100010101, 1010100010001010, 1100100010001100) 

\item (0010001110100010, 0010101000110010, 0011000100010101, 0100010111000100, 

0100110001010100, 0101000100010011, 1010100010001100, 1100100010001010) 

\item (0010011000110010, 0010011010100010, 0100010101100100, 0100110001100100, 

1001000100010011, 1001000100010101, 1010100010001001, 1100100010001001) 

\item (0010000100111010, 0010000101010011, 0011100010100010, 0100001100010101,

 0100010100011100, 0101110010000100, 1010100011000010, 1100101010000100) 

\item (0010000100110110, 0010000110010011, 0100010100010110, 0100100100010101,

 0110100010100010, 0110110010000100, 1010100010010010, 1100100110000100) 

\item (0010000101010101, 0010000110101010, 0011100000110010, 0100001100010011,

 0100110000011100, 0101010110000100, 1010101010000100, 1100100011000010) 

\item (0010000100110101, 0010000110100011, 0100010100010011, 0100110000010101, 

1010100000110010, 1010110010000100, 1100010110000100, 1100100010100010) 

\item (0010000110010101, 0010000110100110, 0100100100010011, 0100110000010110, 

0110010110000100, 0110100000110010, 1010100110000100, 1100100010010010) 

\item (0010010100110010, 0010110010100010, 0100010100110100, 0100110010100100,

 1010000100010011, 1010100010000011, 1100000100010101, 1100100010000101) 

\item (0010010110100010, 0010110000110010, 0100010110100100, 0100110000110100, 

1010000100010101, 1010100010000101, 1100000100010011, 1100100010000011) 

\item (0000001000110011, 0000010101000101, 0001000000110011, 0001010100000101,

 1010000010101000, 1010001010100000, 1100110000001000, 1100110001000000) 

\item (0000001001010011, 0000001101000101, 0001000000111010, 0001010100001100,

 0011000010101000, 0101110000001000, 1010001011000000, 1100101001000000) 

\item (0000001010010011, 0000100101000101, 0001000000110110, 0001010100000110,

 0110000010101000, 0110110000001000, 1010001010010000, 1100100101000000) 

\item (0000000101100101, 0000011000010011, 0010000010101001, 0100110000001001,

 1001000000110010, 1001010100000100, 1010011010000000, 1100100001100000) 

\item (0000000110100101, 0000110000010011, 0010000010100101, 0100110000000011,

 1010010100000100, 1010010110000000, 1100000000110010, 1100100000110000) 

\item (0001010110000101, 0001100000110011, 0010010101000101, 0100001000110011, 

1010000110101000, 1010001010100100, 1100110000011000, 1100110001000010) 

\item (0001001110000101, 0001100001010011, 0010010101001100, 0011001010100100,

 0100001000111010, 0101110001000010, 1010000111001000, 1100101000011000) 

\item (0001100010010011, 0001100110000101, 0010010101000110, 0100001000110110,

 0110001010100100, 0110110001000010, 1010000110011000, 1100100100011000) 

\item (0001000111000101, 0001101000010011, 0010001010101100, 0011010101000100,

 0100110001001010, 0101001000110010, 1010001110001000, 1100100001011000) 

\item (0001000110100101, 0001110000010011, 0010001010100101, 0100110001000011, 

1010010101000100, 1010010110001000, 1100001000110010, 1100100000111000) 

\item (0010010101010100, 0010101010100100, 0011100000010011, 0100001100110010,

 0100110011000010, 0101000110000101, 1010000110001010, 1100100000011100) 

\item (0010001110100100, 0010010111000100, 0011000110000101, 0100101000110010,

 0100110001010010, 0101100000010011, 1010000110001100, 1100100000011010) 

\item (0010010101100100, 0010011010100100, 0100011000110010, 0100110001100010,

 1001000110000101, 1001100000010011, 1010000110001001, 1100100000011001) 

\item (0010010100010101, 0010100000110011, 0100000100110011, 0100010110000101,

 1010000110100010, 1010100010100100, 1100110000010100, 1100110010000010) 

\item (0010010100011100, 0010100001010011, 0011100010100100, 0100000100111010,

 0100001110000101, 0101110010000010, 1010000111000010, 1100101000010100) 

\item (0010010100010110, 0010100010010011, 0100000100110110, 0100100110000101,

 0110100010100100, 0110110010000010, 1010000110010010, 1100100100010100) 

\item (0010000110101100, 0010000111000101, 0011010110000100, 0100101000010011,

 0100110000011010, 0101100000110010, 1010001110000100, 1100100001010010) 

\item (0010010100110100, 0010110010100100, 0100010100110010, 0100110010100010,

 1010000110000011, 1010100000010011, 1100000110000101, 1100100000010101) 

\item (0000001000111010, 0000010101001100, 0001000001010011, 0001001100000101,

 0011001010100000, 0101110001000000, 1010000011001000, 1100101000001000) 

\item (0000001000110110, 0000010101000110, 0001000010010011, 0001100100000101,

 0110001010100000, 0110110001000000, 1010000010011000, 1100100100001000) 

\item (0001001000111010, 0001001001010011, 0001001101000101, 0001010101001100,

 0011001010101000, 0101110001001000, 1010001011001000, 1100101001001000) 

\item (0001001000110110, 0001001010010011, 0001010101000110, 0001100101000101, 

0110001010101000, 0110110001001000, 1010001010011000, 1100100101001000) 

\item (0001000101100101, 0001011000010011, 0010001010101001, 0100110001001001,

 1001001000110010, 1001010101000100, 1010011010001000, 1100100001101000) 

\item (0000001100110010, 0000010101010100, 0010101010100000, 0011000000010011,

 0100110011000000, 0101000100000101, 1010000010001010, 1100100000001100) 

\item (0000010111000100, 0000101000110010, 0010001110100000, 0011000100000101, 

0100110001010000, 0101000000010011, 1010000010001100, 1100100000001010) 

\item (0000010101100100, 0000011000110010, 0010011010100000, 0100110001100000,

 1001000000010011, 1001000100000101, 1010000010001001, 1100100000001001) 

\item (0000000100111010, 0000010100011100, 0010000001010011, 0011100010100000,

 0100001100000101, 0101110010000000, 1010000011000010, 1100101000000100) 

\item (0000000100110110, 0000010100010110, 0010000010010011, 0100100100000101,

 0110100010100000, 0110110010000000, 1010000010010010, 1100100100000100) 

\item (0010001100010101, 0010100000111010, 0011000110100010, 0100000101010011,

 0100010110001100, 0101110000010100, 1010100011000100, 1100101010000010) 

\item (0010100000110110, 0010100100010101, 0100000110010011, 0100010110000110,

 0110000110100010, 0110110000010100, 1010100010010100, 1100100110000010) 

\item (0010100100110010, 0010100110100010, 0100010110010100, 0100110010010100,

 0110000100010011, 0110000100010101, 1010100010000110, 1100100010000110) 

\item (0001010110001100, 0001100000111010, 0010001101000101, 0011000110101000,

 0100001001010011, 0101110000011000, 1010001011000100, 1100101001000010) 

\item (0001010110000110, 0001100000110110, 0010100101000101, 0100001010010011,

 0110000110101000, 0110110000011000, 1010001010010100, 1100100101000010) 

\item (0010000101100101, 0010000110101001, 0100011000010011, 0100110000011001, 

1001010110000100, 1001100000110010, 1010011010000100, 1100100001100010) 

\item (0010010110010100, 0010100110100100, 0100100100110010, 0100110010010010,

 0110000110000101, 0110100000010011, 1010000110000110, 1100100000010110) 

\item (0000010100110010, 0000010100110100, 0010110010100000, 0100110010100000, 

1010000000010011, 1010000010000011, 1100000100000101, 1100100000000101) 

\item (0000010110100100, 0000110000110010, 0010010110100000, 0100110000110000, 

1010000010000101, 1010000100000101, 1100000000010011, 1100100000000011) 

\item (0000000000111001, 0000000001100011, 0000010100001001, 0000011000000101, 

1001000010100000, 1001110000000000, 1010000001100000, 1100011000000000) 

\item (0000000000110101, 0000000010100011, 0000010100000011, 0000110000000101,

 1010000000110000, 1010110000000000, 1100000010100000, 1100010100000000) 

\item (0000000101100011, 0000011000010101, 0010000000111001, 0100010100001001,

 1001000010100010, 1001110000000100, 1010100001100000, 1100011010000000) 

\item (0000000111000011, 0000101000010101, 0010000000111100, 0011110000000100,

 0100010100001010, 0101000010100010, 1010100001010000, 1100001110000000) 

\item (0000001001010101, 0000001101000011, 0001000010101010, 0001110000001100,

 0011000000111000, 0101010100001000, 1010101001000000, 1100001011000000) 

\item (0000001000110101, 0000010101000011, 0001000010100011, 0001110000000101,

 1010000000111000, 1010110001000000, 1100001010100000, 1100010100001000) 

\item (0000001010010101, 0000100101000011, 0001000010100110, 0001110000000110,

 0110000000111000, 0110010100001000, 1010100101000000, 1100001010010000) 

\item (0001000101100011, 0001011000010101, 0010001000111001, 0100010101001001,

 1001001010100010, 1001110001000100, 1010100001101000, 1100011010001000) 

\item (0001000111000011, 0001101000010101, 0010001000111100, 0011110001000100,

 0100010101001010, 0101001010100010, 1010100001011000, 1100001110001000) 

\item (0001001001010101, 0001001010101010, 0001001101000011, 0001110001001100,

 0011001000111000, 0101010101001000, 1010101001001000, 1100001011001000) 

\item (0001001000110101, 0001001010100011, 0001010101000011, 0001110001000101,

 1010001000111000, 1010110001001000, 1100001010101000, 1100010101001000) 

\item (0001001010010101, 0001001010100110, 0001100101000011, 0001110001000110,

 0110001000111000, 0110010101001000, 1010100101001000, 1100001010011000) 

\item (0001101010100010, 0001110011000100, 0010001100111000, 0011000101000011,

 0100010101011000, 0101001000010101, 1010100001001010, 1100001010001100) 

\item (0001001110100010, 0001110001010100, 0010101000111000, 0011001000010101,

 0100010111001000, 0101000101000011, 1010100001001100, 1100001010001010) 

\item (0001011010100010, 0001110001100100, 0010011000111000, 0100010101101000,

 1001000101000011, 1001001000010101, 1010100001001001, 1100001010001001) 

\item (0010000100111001, 0010000101100011, 0100010100011001, 0100011000010101,

 1001100010100010, 1001110010000100, 1010100001100010, 1100011010000100) 

\item (0010000100111100, 0010000111000011, 0011110010000100, 0100010100011010,

 0100101000010101, 0101100010100010, 1010100001010010, 1100001110000100) 

\item (0001000110101010, 0001110000011100, 0010001001010101, 0011100000111000,

 0100001101000011, 0101010110001000, 1010101001000100, 1100001011000010) 

\item (0001000110100011, 0001110000010101, 0010001000110101, 0100010101000011,

 1010100000111000, 1010110001000100, 1100001010100010, 1100010110001000) 

\item (0001000110100110, 0001110000010110, 0010001010010101, 0100100101000011,

 0110010110001000, 0110100000111000, 1010100101000100, 1100001010010010) 

\item (0001110010100010, 0001110010100100, 0010010100111000, 0100010100111000,

 1010000101000011, 1010100001000011, 1100001000010101, 1100001010000101) 

\item (0001010110100010, 0001110000110100, 0010110000111000, 0100010110101000,

 1010001000010101, 1010100001000101, 1100000101000011, 1100001010000011)

\item (0000001001100011, 0000011001000101, 0001000000111001, 0001010100001001, 

1001000010101000, 1001110000001000, 1010001001100000, 1100011001000000) 

\item (0000001011000011, 0000101001000101, 0001000000111100, 0001010100001010,

 0011110000001000, 0101000010101000, 1010001001010000, 1100001101000000) 

\item (0000001001100101, 0000011001000011, 0001000010101001, 0001110000001001,

 1001000000111000, 1001010100001000, 1010011001000000, 1100001001100000) 

\item (0000001010100101, 0000110001000011, 0001000010100101, 0001110000000011,

 1010010100001000, 1010010101000000, 1100000000111000, 1100001000110000) 

\item (0001011010000101, 0001100001100011, 0010010101001001, 0100001000111001,

 1001001010100100, 1001110001000010, 1010000101101000, 1100011000011000) 

\item (0001100011000011, 0001101010000101, 0010010101001010, 0011110001000010,

 0100001000111100, 0101001010100100, 1010000101011000, 1100001100011000) 

\item (0001001010101100, 0001001011000101, 0001101001000011, 0001110001001010,

 0011010101001000, 0101001000111000, 1010001101001000, 1100001001011000) 

\item (0001101010100100, 0001110011000010, 0010010101011000, 0011100001000011,

 0100001100111000, 0101001010000101, 1010000101001010, 1100001000011100) 

\item (0001001110100100, 0001110001010010, 0010010111001000, 0011001010000101,

 0100101000111000, 0101100001000011, 1010000101001100, 1100001000011010) 

\item (0001011010100100, 0001110001100010, 0010010101101000, 0100011000111000,

 1001001010000101, 1001100001000011, 1010000101001001, 1100001000011001) 

\item (0010010100011001, 0010100001100011, 0100000100111001, 0100011010000101, 

1001100010100100, 1001110010000010, 1010000101100010, 1100011000010100) 

\item (0010010100011010, 0010100011000011, 0011110010000010, 0100000100111100,

 0100101010000101, 0101100010100100, 1010000101010010, 1100001100010100) 

\item (0001000110101100, 0001110000011010, 0010001011000101, 0011010110001000,

 0100101001000011, 0101100000111000, 1010001101000100, 1100001001010010) 

\item (0001010110100100, 0001110000110010, 0010010110101000, 0100110000111000,

 1010000101000101, 1010001010000101, 1100001000010011, 1100100001000011) 

\item (0000001000111100, 0000010101001010, 0001000011000011, 0001101000000101,

 0011110001000000, 0101001010100000, 1010000001011000, 1100001100001000) 

\item (0001001110000011, 0001100001010101, 0010110001001100, 0011001000110100,

 0100001010101010, 0101010101000010, 1010101000011000, 1100000111001000) 

\item (0001010110000011, 0001100000110101, 0010110001000101, 0100001010100011, 

1010001000110100, 1010110000011000, 1100000110101000, 1100010101000010) 

\item (0001100010010101, 0001100110000011, 0010110001000110, 0100001010100110,

 0110001000110100, 0110010101000010, 1010100100011000, 1100000110011000) 

\item (0010001100110100, 0010110011000100, 0011000110000011, 0100010101010010, 

0100101010100010, 0101100000010101, 1010100000011010, 1100000110001100) 

\item (0010101000110100, 0010110001010100, 0011100000010101, 0100001110100010,

 0100010111000010, 0101000110000011, 1010100000011100, 1100000110001010) 

\item (0010011000110100, 0010110001100100, 0100010101100010, 0100011010100010,

 1001000110000011, 1001100000010101, 1010100000011001, 1100000110001001) 

\item (0001001000111100, 0001001011000011, 0001010101001010, 0001101001000101, 

0011110001001000, 0101001010101000, 1010001001011000, 1100001101001000) 

\item (0001011010000011, 0001100001100101, 0010110001001001, 0100001010101001,

 1001001000110100, 1001010101000010, 1010011000011000, 1100000101101000) 

\item (0001100010100101, 0001110010000011, 0010110001000011, 0100001010100101,

 1010010100011000, 1010010101000010, 1100000100111000, 1100001000110100) 

\item (0010010101010010, 0010110011000010, 0011100010000011, 0100001100110100, 

0100101010100100, 0101100010000101, 1010000100011010, 1100000100011100) 

\item (0010010111000010, 0010110001010010, 0011100010000101, 0100001110100100,

 0100101000110100, 0101100010000011, 1010000100011100, 1100000100011010) 

\item (0010010101100010, 0010110001100010, 0100011000110100, 0100011010100100,

 1001100010000011, 1001100010000101, 1010000100011001, 1100000100011001) 

\item (0000001100110100, 0000010101010010, 0010110011000000, 0011000010000011,

 0100101010100000, 0101100000000101, 1010000000011010, 1100000100001100) 

\item (0000010111000010, 0000101000110100, 0010110001010000, 0011100000000101, 

0100001110100000, 0101000010000011, 1010000000011100, 1100000100001010) 

\item (0000010101100010, 0000011000110100, 0010110001100000, 0100011010100000,
 
1001000010000011, 1001100000000101, 1010000000011001, 1100000100001001) 

\item (0000000100111100, 0000010100011010, 0010000011000011, 0011110010000000,

 0100101000000101, 0101100010100000, 1010000001010010, 1100001100000100) 

\item (0010100000110101, 0010110000010101, 0100000110100011, 0100010110000011, 

1010100000110100, 1010110000010100, 1100000110100010, 1100010110000010) 

\item (0010100000111100, 0010101000010101, 0011110000010100, 0100000111000011, 

0100010110001010, 0101000110100010, 1010100001010100, 1100001110000010) 

\item (0010100001010101, 0010110000011100, 0011100000110100, 0100000110101010,

 0100001110000011, 0101010110000010, 1010101000010100, 1100000111000010) 

\item (0010100010010101, 0010110000010110, 0100000110100110, 0100100110000011,

 0110010110000010, 0110100000110100, 1010100100010100, 1100000110010010) 

\item (0010100100110100, 0010110010010100, 0100010110010010, 0100100110100010, 

0110000110000011, 0110100000010101, 1010100000010110, 1100000110000110) 

\item (0001010110001010, 0001100000111100, 0010101001000101, 0011110000011000,

 0100001011000011, 0101000110101000, 1010001001010100, 1100001101000010) 

\item (0010100001100101, 0010110000011001, 0100000110101001, 0100011010000011, 

1001010110000010, 1001100000110100, 1010011000010100, 1100000101100010) 

\item (0010100010100101, 0010110000010011, 0100000110100101, 0100110010000011,

 1010010100010100, 1010010110000010, 1100000100110010, 1100100000110100) 

\item (0010010110010010, 0010110010010010, 0100100100110100, 0100100110100100,

 0110100010000011, 0110100010000101, 1010000100010110, 1100000100010110) 

\item (0000010110100010, 0000110000110100, 0010110000110000, 0100010110100000, 

1010000000010101, 1010100000000101, 1100000010000011, 1100000100000011) 

\item (0000001001101010, 0000011001001100, 0001000001011001, 0001001100001001,

 0011001001100000, 0101011001000000, 1001000011001000, 1001101000001000) 

\item (0000001001100110, 0000011001000110, 0001000010011001, 0001100100001001, 

0110001001100000, 0110011001000000, 1001000010011000, 1001100100001000) 

\item (0001000100111001, 0001010100011001, 0010001001100011, 0100011001000101,

 1001100010101000, 1001110010001000, 1010001001100010, 1100011001000100) 

\item (0001000101011001, 0001001100011001, 0010001001101010, 0011001001100010,

 0100011001001100, 0101011001000100, 1001100011001000, 1001101010001000) 

\item (0001000110011001, 0001100100011001, 0010001001100110, 0100011001000110, 

0110001001100010, 0110011001000100, 1001100010011000, 1001100110001000) 

\item (0010001101100010, 0010101001100010, 0011000100011001, 0100011001010100,

 0100011011000100, 0101000100011001, 1001100010001010, 1001100010001100) 

\item (0001001000111001, 0001001001100011, 0001010101001001, 0001011001000101,

 1001001010101000, 1001110001001000, 1010001001101000, 1100011001001000) 

\item (0001001001011001, 0001001001101010, 0001001101001001, 0001011001001100, 

0011001001101000, 0101011001001000, 1001001011001000, 1001101001001000) 

\item (0001001001100110, 0001001010011001, 0001011001000110, 0001100101001001,

 0110001001101000, 0110011001001000, 1001001010011000, 1001100101001000) 

\item (0001000101101001, 0001011000011001, 0010001001101001, 0100011001001001,

 1001001001100010, 1001011001000100, 1001011010001000, 1001100001101000)

\item (0001000110101001, 0001110000011001, 0010001001100101, 0100011001000011, 

1001010110001000, 1001100000111000, 1010011001000100, 1100001001100010) 

\item (0010011001010100, 0010101001100100, 0011100000011001, 0100001101100010,

 0100011011000010, 0101000110001001, 1001000110001010, 1001100000011100) 

\item (0010001101100100, 0010011011000100, 0011000110001001, 0100011001010010,

 0100101001100010, 0101100000011001, 1001000110001100, 1001100000011010) 

\item (0000001101100010, 0000011001010100, 0010101001100000, 0011000000011001,

 0100011011000000, 0101000100001001, 1001000010001010, 1001100000001100) 

\item (0000011011000100, 0000101001100010, 0010001101100000, 0011000100001001,

 0100011001010000, 0101000000011001, 1001000010001100, 1001100000001010) 

\item (0000011001100010, 0000011001100100, 0010011001100000, 0100011001100000,

 1001000000011001, 1001000010001001, 1001000100001001, 1001100000001001) 

\item (0000001001101001, 0000011001001001, 0001000001101001, 0001011000001001,

 1001000001101000, 1001001001100000, 1001011000001000, 1001011001000000) 

\item (0001000111001001, 0001101000011001, 0010001001101100, 0011011001000100,

 0100011001001010, 0101001001100010, 1001001110001000, 1001100001011000) 

\item (0001011011000100, 0001101001100010, 0010001101101000, 0011000101001001,

 0100011001011000, 0101001000011001, 1001001010001100, 1001100001001010) 

\item (0001001101100010, 0001011001010100, 0010101001101000, 0011001000011001,

 0100011011001000, 0101000101001001, 1001001010001010, 1001100001001100) 

\item (0001011001100010, 0001011001100100, 0010011001101000, 0100011001101000,

 1001000101001001, 1001001000011001, 1001001010001001, 1001100001001001) 

\item (0001001001101100, 0001001011001001, 0001011001001010, 0001101001001001,

 0011011001001000, 0101001001101000, 1001001001011000, 1001001101001000) 

\item (0001001001100101, 0001001010101001, 0001011001000011, 0001110001001001,

 1001001000111000, 1001010101001000, 1010011001001000, 1100001001101000) 

\item (0001011011000010, 0001101001100100, 0010011001011000, 0011100001001001,

 0100001101101000, 0101001010001001, 1001000101001010, 1001001000011100) 

\item (0001001101100100, 0001011001010010, 0010011011001000, 0011001010001001, 

0100101001101000, 0101100001001001, 1001000101001100, 1001001000011010) 

\item (0000001101100100, 0000011001010010, 0010011011000000, 0011000010001001,

 0100101001100000, 0101100000001001, 1001000000011010, 1001000100001100) 

\item (0000011011000010, 0000101001100100, 0010011001010000, 0011100000001001,

 0100001101100000, 0101000010001001, 1001000000011100, 1001000100001010) 

\item (0000001101100000, 0000011001010000, 0000011011000000, 0000101001100000,

 0011000000001001, 0101000000001001, 1001000000001010, 1001000000001100) 

\item (0000000101101010, 0000011000011100, 0010000001011001, 0011100001100000,

 0100001100001001, 0101011010000000, 1001000011000010, 1001101000000100) 

\item (0000000101100110, 0000011000010110, 0010000010011001, 0100100100001001,

 0110011010000000, 0110100001100000, 1001000010010010, 1001100100000100) 

\item (0010000101011001, 0010000101101010, 0011100001100010, 0100001100011001,

 0100011000011100, 0101011010000100, 1001100011000010, 1001101010000100) 

\item (0010000101100110, 0010000110011001, 0100011000010110, 0100100100011001,

 0110011010000100, 0110100001100010, 1001100010010010, 1001100110000100) 

\item (0010001100011001, 0010100001101010, 0011000101100010, 0100000101011001, 

0100011010001100, 0101011000010100, 1001100011000100, 1001101010000010) 

\item (0010100001100110, 0010100100011001, 0100000110011001, 0100011010000110,

 0110000101100010, 0110011000010100, 1001100010010100, 1001100110000010) 

\item (0001000101101010, 0001011000011100, 0010001001011001, 0011100001101000,

 0100001101001001, 0101011010001000, 1001001011000010, 1001101001000100) 

\item (0001000101100110, 0001011000010110, 0010001010011001, 0100100101001001,

 0110011010001000, 0110100001101000, 1001001010010010, 1001100101000100) 

\item (0010011000011001, 0010100001101001, 0100000101101001, 0100011010001001,

 1001000101100010, 1001011000010100, 1001011010000010, 1001100001100100) 

\item (0010011010010100, 0010100101100100, 0100011010010010, 0100100101100010,

 0110000110001001, 0110100000011001, 1001000110000110, 1001100000010110) 

\item (0000011010010100, 0000100101100010, 0010100101100000, 0100011010010000,

 0110000000011001, 0110000100001001, 1001000010000110, 1001100000000110) 

\item (0000000101101001, 0000011000011001, 0010000001101001, 0100011000001001,

 1001000001100010, 1001011000000100, 1001011010000000, 1001100001100000) 

\item (0010000101101100, 0010000111001001, 0011011010000100, 0100011000011010,

 0100101000011001, 0101100001100010, 1001001110000100, 1001100001010010) 

\item (0001011010001100, 0001100001101010, 0010001101001001, 0011000101101000,

 0100001001011001, 0101011000011000, 1001001011000100, 1001101001000010) 

\item (0001011010000110, 0001100001100110, 0010100101001001, 0100001010011001,

 0110000101101000, 0110011000011000, 1001001010010100, 1001100101000010) 

\item (0001011010010100, 0001100101100010, 0010100101101000, 0100011010011000,

 0110000101001001, 0110001000011001, 1001001010000110, 1001100001000110) 

\item (0001000101101100, 0001011000011010, 0010001011001001, 0011011010001000, 

0100101001001001, 0101100001101000, 1001001001010010, 1001001101000100) 

\item (0001011010001001, 0001100001101001, 0010011001001001, 0100001001101001, 

1001000101101000, 1001001001100100, 1001011000011000, 1001011001000010) 

\item (0001011010010010, 0001100101100100, 0010011010011000, 0100100101101000,

 0110001010001001, 0110100001001001, 1001000101000110, 1001001000010110) 

\item (0000011010010010, 0000100101100100, 0010011010010000, 0100100101100000,

 0110000010001001, 0110100000001001, 1001000000010110, 1001000100000110) 

\item (0000010101100000, 0000011000110000, 0000011010100000, 0000110001100000,

 1001000000000011, 1001000000000101, 1010000000001001, 1100000000001001) 

\item (0000000101010110, 0000001100010110, 0010000010011010, 0011000010010010,

 0100100100001100, 0101100100000100, 0110100011000000, 0110101010000000) 

\item (0000000110010110, 0000100100010110, 0010000010010110, 0100100100000110, 

0110000010010010, 0110100010010000, 0110100100000100, 0110100110000000) 

\item (0001000100110110, 0001010100010110, 0010001010010011, 0100100101000101, 

0110100010101000, 0110110010001000, 1010001010010010, 1100100101000100) 

\item (0001000101010110, 0001001100010110, 0010001010011010, 0011001010010010,

 0100100101001100, 0101100101000100, 0110100011001000, 0110101010001000) 

\item (0001000110010110, 0001100100010110, 0010001010010110, 0100100101000110,

 0110001010010010, 0110100010011000, 0110100101000100, 0110100110001000) 

\item (0010001110010010, 0010101010010010, 0011000100010110, 0100100101010100, 

0100100111000100, 0101000100010110, 0110100010001010, 0110100010001100) 

\item (0010000101010110, 0010000110011010, 0011100010010010, 0100001100010110,

 0100100100011100, 0101100110000100, 0110100011000010, 0110101010000100) 

\item (0000001001010110, 0000001101000110, 0001000010011010, 0001100100001100,

 0011000010011000, 0101100100001000, 0110001011000000, 0110101001000000) 

\item (0000001010010110, 0000100101000110, 0001000010010110, 0001100100000110,

 0110000010011000, 0110001010010000, 0110100100001000, 0110100101000000) 

\item (0000000110100110, 0000110000010110, 0010000010010101, 0100100100000011, 

0110010110000000, 0110100000110000, 1010100100000100, 1100000010010010) 

\item (0001001110000110, 0001100001010110, 0010100101001100, 0011001010010100,

 0100001010011010, 0101100101000010, 0110000111001000, 0110101000011000) 

\item (0001100010010110, 0001100110000110, 0010100101000110, 0100001010010110,

 0110000110011000, 0110001010010100, 0110100100011000, 0110100101000010) 

\item (0001000111000110, 0001101000010110, 0010001010011100, 0011100101000100, 

0100100101001010, 0101001010010010, 0110001110001000, 0110100001011000) 

\item (0010100101010100, 0010101010010100, 0011100000010110, 0100001110010010,

 0100100111000010, 0101000110000110, 0110000110001010, 0110100000011100) 

\item (0010001110010100, 0010100111000100, 0011000110000110, 0100100101010010,

 0100101010010010, 0101100000010110, 0110000110001100, 0110100000011010) 

\item (0010100001010110, 0010100100011100, 0011100010010100, 0100000110011010,

 0100001110000110, 0101100110000010, 0110000111000010, 0110101000010100) 

\item (0010100010010110, 0010100100010110, 0100000110010110, 0100100110000110, 

0110000110010010, 0110100010010100, 0110100100010100, 0110100110000010) 

\item (0010000110011100, 0010000111000110, 0011100110000100, 0100100100011010,

 0100101000010110, 0101100010010010, 0110001110000100, 0110100001010010) 

\item (0000001001011010, 0000001101001100, 0001000001011010, 0001001100001100,

 0011000011001000, 0011001011000000, 0101101000001000, 0101101001000000) 

\item (0001000100111010, 0001010100011100, 0010001001010011, 0011100010101000,

 0100001101000101, 0101110010001000, 1010001011000010, 1100101001000100) 

\item (0001000101011010, 0001001100011100, 0010001001011010, 0011001011000010,

 0011100011001000, 0100001101001100, 0101101001000100, 0101101010001000) 

\item (0001000110011010, 0001100100011100, 0010001001010110, 0011100010011000,

 0100001101000110, 0101100110001000, 0110001011000010, 0110101001000100) 

\item (0001000101011100, 0001001100011010, 0010001011001010, 0011001001010010,

 0011101010001000, 0100101001001100, 0101001101000100, 0101100011001000) 

\item (0001000100111100, 0001010100011010, 0010001011000011, 0011110010001000,

 0100101001000101, 0101100010101000, 1010001001010010, 1100001101000100) 

\item (0001000110011100, 0001100100011010, 0010001011000110, 0011100110001000,

 0100101001000110, 0101100010011000, 0110001001010010, 0110001101000100) 

\item (0010001101010010, 0010101011000010, 0011000100011010, 0011100010001010, 

0100001101010100, 0100101011000100, 0101000100011100, 0101100010001100) 

\item (0010001111000010, 0010101001010010, 0011000100011100, 0011100010001100,

 0100001111000100, 0100101001010100, 0101000100011010, 0101100010001010) 

\item (0010011001010010, 0010011011000010, 0011100010001001, 0100001101100100,

 0100101001100100, 0101100010001001, 1001000100011010, 1001000100011100) 

\item (0001001001010110, 0001001010011010, 0001001101000110, 0001100101001100,

 0011001010011000, 0101100101001000, 0110001011001000, 0110101001001000) 

\item (0010001101010100, 0010101011000100, 0011000110001010, 0011100000011010, 

0100001101010010, 0100101011000010, 0101000110001100, 0101100000011100) 

\item (0000001101010010, 0000001101010100, 0010101011000000, 0011000000011010,

 0011000010001010, 0100101011000000, 0101000100001100, 0101100000001100) 

\item (0000001111000100, 0000101001010010, 0010001111000000, 0011000010001100,

 0011000100001100, 0100101001010000, 0101000000011010, 0101100000001010) 

\item (0000000101011010, 0000001100011100, 0010000001011010, 0011000011000010,

 0011100011000000, 0100001100001100, 0101101000000100, 0101101010000000) 

\item (0010001100011100, 0010100001011010, 0011000111000010, 0011100011000100,

 0100000101011010, 0100001110001100, 0101101000010100, 0101101010000010) 

\item (0010000101011100, 0010000111001010, 0011100001010010, 0011101010000100,

 0100001100011010, 0100101000011100, 0101001110000100, 0101100011000010) 

\item (0010100101010010, 0010100111000010, 0011100010000110, 0100001110010100,

 0100101010010100, 0101100010000110, 0110000100011010, 0110000100011100) 

\item (0001001110001100, 0001100001011010, 0010001101001100, 0011000111001000, 

0011001011000100, 0100001001011010, 0101101000011000, 0101101001000010) 

\item (0000001110100100, 0000110001010010, 0010010111000000, 0011000010000101, 

0100101000110000, 0101100000000011, 1010000100001100, 1100000000011010) 

\item (0000011010100100, 0000110001100010, 0010010101100000, 0100011000110000,

 1001000010000101, 1001100000000011, 1010000100001001, 1100000000011001) 

\item (0000000110100011, 0000110000010101, 0010000000110101, 0100010100000011,

 1010100000110000, 1010110000000100, 1100000010100010, 1100010110000000) 

\item (0001000111001010, 0001101000011100, 0010001001011100, 0011100001011000,

 0011101001000100, 0100001101001010, 0101001011000010, 0101001110001000) 

\item (0001001001011100, 0001001011001010, 0001001101001010, 0001101001001100,

 0011001001011000, 0011101001001000, 0101001011001000, 0101001101001000) 

\item (0001001010011100, 0001001011000110, 0001100101001010, 0001101001000110, 

0011100101001000, 0101001010011000, 0110001001011000, 0110001101001000) 

\item (0001101011000010, 0001101011000100, 0010001101011000, 0011000101001010, 

0011100001001010, 0100001101011000, 0101001000011100, 0101001010001100) 

\item (0001001111000010, 0001101001010100, 0010101001011000, 0011001000011100,

 0011100001001100, 0100001111001000, 0101000101001010, 0101001010001010) 

\item (0001010111000010, 0001101000110100, 0010110001011000, 0011100001000101,

 0100001110101000, 0101001010000011, 1010001000011100, 1100000101001010) 

\item (0001100010100011, 0001110010000101, 0010010101000011, 0100001000110101,

 1010000100111000, 1010110001000010, 1100001010100100, 1100010100011000) 

\item (0001001111000100, 0001101001010010, 0010001111001000, 0011000101001100,

 0011001010001100, 0100101001011000, 0101001000011010, 0101100001001010) 

\item (0001100010100110, 0001110010000110, 0010100101000011, 0100001010010101,

 0110000100111000, 0110010100011000, 1010100101000010, 1100001010010100) 

\item (0000001111000010, 0000101001010100, 0010101001010000, 0011000000011100,

 0011100000001100, 0100001111000000, 0101000010001010, 0101000100001010) 

\item (0010011000010101, 0010100000111001, 0100000101100011, 0100010110001001,

 1001000110100010, 1001110000010100, 1010100001100100, 1100011010000010) 

\item (0000001001011001, 0000001101001001, 0001000001101010, 0001011000001100,

 0011000001101000, 0101011000001000, 1001001011000000, 1001101001000000) 

\item (0010011000011100, 0010100001011001, 0011100001100100, 0100000101101010,

 0100001110001001, 0101011010000010, 1001000111000010, 1001101000010100) 

\item (0001010101100010, 0001011000110100, 0010110001101000, 0100011010101000,

 1001001010000011, 1001100001000101, 1010001000011001, 1100000101001001) 

\item (0000001000111001, 0000010101001001, 0001000001100011, 0001011000000101,

 1001001010100000, 1001110001000000, 1010000001101000, 1100011000001000)

\end{enumerate}

    \bibliographystyle{plain}
    \bibliography{bibliography}

\end{document}